\pgfplotsset{compat=1.18,
    every non boxed x axis/.style={} 
}
  \theoremstyle{plain}
  \newtheorem{thm}{Theorem}[section]
  \newtheorem{lem}[thm]{Lemma}
  \newtheorem{cor}[thm]{Corollary}
    \newtheorem{prop}[thm]{Proposition}
  \theoremstyle{definition}
  \newtheorem{defn}[thm]{Definition}
\newtheorem{exmp}{Example}[section]
\theoremstyle{remark}
\def\hh{{\mathbf h}}
\newcommand{\bigO}{{O}}
\newcommand{\dvs}{{\rm div}}
\newcommand{\algorithmicbreak}{\textbf{break}}
\newcommand\norm[1]{\left\lVert#1\right\rVert}
\newcommand\infnorm[1]{\left\lVert#1\right\rVert_\infty}
\newcommand\td{{\varepsilon}}
\newcommand\rr[1]{\mathsf{RR}\left(#1\right)}
\newcommand\aug{\fboxsep=-\fboxrule\!\!\!\fbox{\strut}\!\!\!}
\definecolor{blue2}{rgb}{0.36, 0.54, 0.66}
\definecolor{red2}{rgb}{1.0, 0.13, 0.32} 
\definecolor{violet2}{rgb}{0.53, 0.38, 0.56}
\title[Solving Norm Equations in Global Function Fields]{Solving Norm Equations in Global Function Fields}
\author{Sumin Leem}
\address{Department of Mathematics and Statistics\\
	University of Calgary, 2500 University Drive NW\\
	Calgary, AB T2N 1N4, Canada}
\email{sumin.leem0@gmail.com}
\author{Michael J. Jacobson, Jr.}
\address{Department of Computer Science\\
	University of Calgary, 2500 University Drive NW\\
	Calgary, AB T2N 1N4, Canada}
\email{jacobs@ucalgary.ca}
\author{Renate Scheidler}
\address{Department of Mathematics and Statistics\\
	University of Calgary, 2500 University Drive NW\\
	Calgary, AB T2N 1N4, Canada}
\email{rscheidl@ucalgary.ca}
\begin{document}

\begin{abstract}
We present two new algorithms for solving norm equations over global function fields with at least one infinite place of degree $1$ and no wild ramification. The first of these is a substantial improvement of a method due to Ga\'{a}l and Pohst, while the second approach uses index calculus techniques and is significantly faster asymptotically and in practice. Both algorithms incorporate compact representations of field elements which results in a significant gain in performance compared to the Ga\'{a}l-Pohst approach. We 
provide Magma implementations, analyze the complexity of all three algorithms under varying asymptotics on the field parameters, and provide empirical data on their performance.    
\end{abstract}

\keywords{Primary 11Y16; Secondary 11Y40, 11R58, 11D57, 11G20}

\maketitle

\section{Introduction}

Solving norm equations, which are a special instance of Diophantine equations over global fields, is a classical research area in number theory. In the setting of number fields, this problem has undergone extensive investigation. In 1973, Siegel proposed a method for solving norm equations in Galois fields by bounding the absolute values of the solutions  \cite{SiegelNENF}. In 1989, Pohst and Zassenhaus introduced a technique for solving norm equations over algebraic number fields by exhaustive search \cite{PohstZassenhaus}, using inequalities given in \cite{MahlerinequalitiesIB}. In \cite{fiekerjurkpohst}, Fieker, Jurk, and Pohst presented an exhaustive search algorithm for solving relative norm equations that uses a modified version of the Finke-Pohst enumeration algorithm of \cite{FinckePohst}. Finally, Simon developed a way to solve norm equations algebraically using $S$-units in 2002 \cite{SimonNENF}. In contrast,
solving norm equations in algebraic function fields has undergone far less exploration. The only algorithm in the literature to date, proposed in 2009, is due to Ga\'{a}l and Pohst \cite{GaalPohstnorm}, who adapted the exhaustive search method of \cite{PohstZassenhaus} to function fields. 

One of the difficulties arising in solving norm equations in any global field is the size of their solutions. An illustrative example of this behaviour is exhibited by algebraic units which can be extremely large, yet their norm is very small. An innovative way to address this problem is to represent the solutions, as well as other field elements arising in the computation of these solutions, in compact representation. 
In global function fields, 
the concept of compact representations was first introduced in 1996 by Scheidler \cite{ScheidlerCR} in the setting of quadratic extensions. In that source and a subsequent paper  \cite{ScheidlerCRApp}, the concept was used to prove membership in NP and other complexity results for several decision problems arising in computational number theory. 
%
In 2013, Eisentr{\"a}ger and Hallgren generalized the concept of compact representation to arbitrary global function fields \cite{EisentragerHallgren}. They provided a proof that the principal ideal problem in this setting belongs to NP and presented polynomial time quantum algorithms for computing a generator of a principal ideal, the unit group, and the class group of a global function field.


We present two novel algorithms for solving norm equations in global function fields using compact representations; one is an exhaustive search algorithm inspired by the existing method by Ga\'{a}l and Pohst in \cite{GaalPohstnorm}, and the other is based on principal ideal testing via index calculus.  Our algorithms require that the function field have at least one infinite place of degree 1 and no wild ramification.  We provide detailed complexity analyses of our new algorithms as well as the Ga\'{a}l-Pohst algorithm and the algorithm for computing compact representations of Eisentr\"ager and Hallgren \cite{EisentragerHallgren}.  All these algorithms were implemented in Magma \cite{magma}, and we performed extensive numerical experiments
to compare their performance in practice. Our Magma implementation, including the testing code, can be found in \cite{github_Sumin}. 

Incorporating compact representations into the algorithms not only allowed for smaller representations of solutions, but also led to a significant speed-up 
compared to the Ga\'{a}l-Pohst method that does not use compact representations.  Our new algorithms outperformed the Ga\'{a}l-Pohst algorithm enormously in terms of run time and have significantly better asymptotic complexities, exponentially better in terms of most of the main function field parameters. The algorithm using 
index calculus 
was the most efficient both in terms of asymptotic complexity and in practice, especially for large inputs.

\section{Notation and Preliminaries}

For background and an algebraic treatment of global function fields, the reader is referred to \cite{StichtenothBook, RosenNTFF}.

\subsection{Global function fields and norm equations}\label{sec:FFandNE}

Let $k=\mathbb{F}_q$ be a finite field of $q$ elements and $x$ be transcendental over $k$. Denote the polynomial ring and the rational function field over $k$ in $x$ by $k[x]$ and $k(x)$, respectively. A global function field $F$ is a finite algebraic extension of $k(x)$. Let $n$ denote its extension degree over $k(x)$ and $g$ its genus. We assume that $\gcd(n,q) = 1$ to avoid wild ramification. 

The places of $F$ are partitioned as $P(F) = P_\infty(F) \cup P_0(F)$ where $P_\infty(F)$ is the set of \emph{infinite places} of $F/K(x)$ (consisting of the poles of $x$), and $P_0(F)$ is the set of \emph{finite places} of $F/k(x)$ (corresponding to the non-zero prime ideals in $O_F$). 

The \emph{maximum norm} of of any element $\alpha \in F^\times$ is defined to be 
\begin{equation*}\label{def:maxnorm} 
    \infnorm{\alpha} = \max_{P\in {P}_\infty(F)} \{-v_P(\alpha)/e_P \}, 
\end{equation*}
where $v_P()$ denotes the discrete valuation corresponding to a place $P \in P(F)$ and~$e_P$ is the ramification index of $P$ in $F/k(x)$.

Henceforth, we fix a \emph{reduced} basis of $F/k(x)$, 
i.e.\ a basis $\mathcal{B} = \{ \omega_1, \ldots , \omega_n\}$ that is also a $k[x]$-basis of the maximal order $O_F$ such that $\infnorm{\omega_1} \le \cdots \le \infnorm{\omega_n}$ and 
\[ \infnorm{ \lambda_1 \omega_1 + \cdots + \lambda_n \omega_n} = \max_{1 \le i \le n} \infnorm{\lambda_i \omega_i} \]
for for all $\lambda_1, \ldots , \lambda_n \in k(x)$, not all zero.

A \emph{norm equation} in $F/k(x)$ is an identity of the form 
\begin{equation}\label{eq:ne_defn}
    \textrm{Norm}_{F/k(x)}(\alpha)=c,
\end{equation}
where $\alpha \in O_F$ and $c\in k(x)\setminus \{0\}$. Solving a norm equation \eqref{eq:ne_defn} refers to finding all $\alpha \in O_F$ up to associates, i.e.\ factors that are units in $O_F$, such that $\textrm{Norm}_{F/k(x)}(\alpha) \in ck^\times$. By slight abuse of terminology, a solution of \eqref{eq:ne_defn} will mean a solution of $\textrm{Norm}_{F/k(x)}(\alpha)=\zeta c$ for any $\zeta \in k^\times$. Testing whether two elements $\alpha, \beta \in O_F$ are associate is accomplished by identifying all the places $P \in P_0(F)$ for which $v_P(\alpha) \ne 0$ or $v_P(\beta) \ne 0$, and then checking that $v_P(\alpha) = v_P(\beta)$ for all these places.

\subsection{$S$-units and their lattices} \label{ss:unit-lattices}

For any set $S \subseteq P(F)$, let $O_S$ denote the ring of $S$-integers and $O_S^\times$ the group of $S$-units.
For $S = P_\infty(F)$, we have $O_S = O_F$. 
%
Assume now that $S$ is finite with $P_\infty(F) \subseteq S \subset P(F)$. Fixing an order on the places $P_1, \ldots , P_{|S|}$ of $S$,
consider the two group homomorphisms
\begin{align*}
    \phi_{S} :F^\times \longrightarrow \mathbb{Z}^{\vert S \vert }, \qquad & \alpha \mapsto \left(-v_{P_1}(\alpha), \dots , -v_{P_{\vert S\vert}}(\alpha)\right) , \\
     \Phi_{S} :F^\times \longrightarrow \mathbb{Z}^{\vert S \vert }, \qquad & \alpha \mapsto \left(-v_{P_1}(\alpha) \deg P_1, \dots , -v_{P_{\vert S\vert}}(\alpha)\deg P_{|S|} \right).\label{map:Sunitlattice}
\end{align*}
The 
images $\Lambda_S' = \phi(O_S^\times)$ and $\Lambda_S = \Phi_S(O_S^\times)$ are the
\emph{$S$-unit valuation lattice} and the \emph{$S$-unit lattice} of $F/k(x)$, respectively; they are lattices over $\mathbb{Z}$ of rank $|S|-1$. 
Put $R_S' = \det \Lambda_S'$ and $R_S = \det \Lambda_S$,
where we write $R_S = R_F$ for $S=P_\infty(F)$ 
and refer to $R_F$ as the \emph{regulator} of $F/k(x)$.

Let $Cl(F)$ denote the class group of $F$. For any divisor $D$, let $[D]$ denote its class in $Cl(F)$. Then the kernel of the map
\begin{equation} \label{map:Psi_S}
    \Psi_S: \mathbb{Z}^{|S|} \rightarrow Cl(F) \quad \mbox{via} \quad (v_1, \ldots , v_{|S|}) \mapsto \left [ \sum_{i=1}^{|S|} v_i P_i \right ] 
\end{equation}
is isomorphic to 
$\Lambda_S'$, so basis of $\Lambda_S'$ by computed via a basis of $\ker(\Psi_S)$, computed in Algorithm \ref{alg:Svalmat}. 

\begin{algorithm}
    \caption{SValMat}
    \label{alg:Sunitlat} \label{alg:Svalmat}
\begin{algorithmic}[1]
\REQUIRE a finite set $S=\{P_1, \dots, P_{\vert S\vert}\}$ of places including all infinite places of $F$
\ENSURE An $S$-unit value matrix matrix $M_S \in  \mathbb{Z}^{(\vert S\vert -1) \times \vert S \vert}$. 
    \STATE Compute the structure of $Cl(F)$ using \cite[Algorithm 5.5]{hessphd} or the technique of \cite{CDiem_classgp_q}.
    \STATE $\Psi_S\leftarrow$ the map defined in (\ref{map:Psi_S}) 
    \STATE $\{v_1, \dots, v_{\vert S\vert}\}  \leftarrow$ Generators of $\ker(\Psi_S)$
    \STATE $M_0\leftarrow $ $(\vert S\vert -1) \times \vert S \vert$ matrix whose $i$-th row is $v_i$ for $1\le i \le \vert S\vert-1$
    \STATE $M_S\leftarrow$ LLL-reduction on $M_0$
    \RETURN $M_S$
    \end{algorithmic}
\end{algorithm}


\begin{lem}\label{lem:comp_SValMat}
The cost of Algorithm \ref{alg:Svalmat} is subexponential in $g$, and polynomial in $q$ and $n$.
\end{lem}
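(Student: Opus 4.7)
The plan is to analyze each step of Algorithm \ref{alg:Svalmat} separately and identify the dominant contribution. Step 1 invokes one of the class group algorithms of \cite{hessphd} or \cite{CDiem_classgp_q}, both of which have complexity that is subexponential in $g$ and polynomial in $n$ and $q$, and each of which outputs generators of $Cl(F)$ together with their orders. I expect this step to dominate the overall complexity.

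Step 3 amounts to computing the kernel of a homomorphism from $\mathbb{Z}^{|S|}$ into the finite abelian group $Cl(F)$. I would first express each class $[P_i]$ for $P_i \in S$ as an integer combination of the generators of $Cl(F)$ produced in Step 1, then compute generators of the kernel via Smith normal form on the resulting matrix. By the Hasse--Weil bound, the class number satisfies $|Cl(F)| \le (\sqrt{q}+1)^{2g}$, so all integer entries in this computation have bit length $O(g \log q)$, and the SNF step runs in time polynomial in $|S|$, $n$, $g$, and $\log q$. Step 4 is a matrix assembly, and Step 5 applies LLL to an $(|S|-1) \times |S|$ integer matrix whose entries have bit length polynomial in the preceding parameters; LLL has polynomial complexity in the matrix dimensions and in the logarithm of the maximum entry, contributing only a polynomial factor.

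Combining these bounds, every step apart from Step 1 runs in time polynomial in $n$, $q$, $|S|$, and $g$, so the total cost is dominated by the class group computation and is therefore subexponential in $g$ and polynomial in $n$ and $q$. The main technical obstacle is a careful accounting of the bit sizes of the intermediate vectors $v_i$ output by the kernel computation, so that the subsequent LLL reduction truly remains polynomial; this should follow from standard growth bounds on entries of Hermite or Smith normal forms of integer matrices whose entries are controlled by the class number.
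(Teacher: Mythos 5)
Your proposal is correct and follows essentially the same route as the paper: the cost is dominated by Step 1, whose complexity is covered by Hess's relation search (subexponential in $g$) and Diem's method (polynomial in $q$), with the remaining steps polynomial. The paper's own proof simply asserts that Step 1 dominates without the accounting of the kernel/SNF and LLL steps that you supply, so your version is, if anything, more complete.
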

\begin{proof}
The cost of Algorithm \ref{alg:Svalmat} is dominated by step 1. When $g\rightarrow \infty$, we use Hess's relation search algorithm \cite[Algorithmus 5.5]{hessphd} whose expected run time is subexponential in $g$ under a reasonable smoothness assumption  \cite[Glattheitsannahme 4.19 and Satz 5.23]{hessphd}. 
%
%
When $q \rightarrow \infty$, Diem's method \cite{CDiem_classgp_q} computes $Cl^0(F)$ in expected time 
that is polynomial in $q$. Finally, class group computation is polynomial in $n$ when $n \rightarrow \infty$ and $g$, $q$ are considered fixed.
\end{proof}

For analyzing the size of the output of Algorithm \ref{alg:Svalmat}, we consider the standard maximum norm of any matrix $M = (m_{ij})$ over $\mathbb{Z}$, given by
\[ 
\infnorm{M} = \max_{i,j} |m_{ij}|.\]
(This should not be confused with the maximum norm on $F$; the context makes it clear which norm is under consideration.)

%

\begin{prop}\label{prop:MatrixMS_entry_bound}
The output $M_S$ of Algorithm \ref{alg:Svalmat} satisfies
\[ \infnorm{M_S} \le 2^{(\vert S \vert -1)(\vert S \vert -2)/4}R_{S}' .\]
\end{prop}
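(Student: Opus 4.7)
The plan is to view the rows of $M_S$ as an LLL-reduced basis of the lattice $\ker(\Psi_S) \subset \mathbb{Z}^{|S|}$, which by the isomorphism recalled just before the algorithm is isomorphic as a $\mathbb{Z}$-lattice to $\Lambda_S'$. This lattice has rank $r := |S|-1$ and covolume $R_S' = \det(\Lambda_S')$, since the isomorphism preserves the inner-product structure inherited from $\mathbb{Z}^{|S|}$ (the coordinates are the negated valuations at the places of $S$).

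First I would recall the standard Hadamard-type inequality for an LLL-reduced basis $b_1, \ldots, b_r$ of a lattice $L$ of rank $r$ (with the usual parameter $\delta = 3/4$ used by LLL in Step 5 of the algorithm): from $\|b_i\|_2 \le 2^{(i-1)/2} \|b_i^\ast\|_2$ and $\det(L) = \prod_i \|b_i^\ast\|_2$, multiplying over $i$ yields
\[
\prod_{i=1}^{r} \|b_i\|_2 \;\le\; 2^{\sum_{i=1}^{r}(i-1)/2} \det(L) \;=\; 2^{r(r-1)/4} \det(L).
\]
Applied to our setting with $L = \ker(\Psi_S)$ and $r = |S|-1$, this gives $\prod_{i=1}^{|S|-1} \|v_i\|_2 \le 2^{(|S|-1)(|S|-2)/4} R_S'$, where $v_1,\ldots,v_{|S|-1}$ are the rows of $M_S$.

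Next I would pass from the product bound to the maximum. Because $\ker(\Psi_S) \subseteq \mathbb{Z}^{|S|}$ is an integer lattice and each basis vector $v_j$ is non-zero, we have $\|v_j\|_2 \ge 1$ for every $j$. Hence for each fixed $i$,
\[
\|v_i\|_2 \;\le\; \prod_{j=1}^{|S|-1} \|v_j\|_2 \;\le\; 2^{(|S|-1)(|S|-2)/4} R_S'.
\]
Finally, the matrix max-norm is controlled by the row Euclidean norms: $\|M_S\|_\infty = \max_{i,j} |m_{ij}| \le \max_i \|v_i\|_2$, which delivers the asserted bound.

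The only subtle point, and what I regard as the main obstacle, is justifying that the covolume of $\ker(\Psi_S)$ (as a sublattice of $\mathbb{Z}^{|S|}$ with the standard Euclidean structure) really equals $R_S' = \det \Lambda_S'$ rather than some scaled version of it. This follows from the definition of $\phi_S$, which sends $O_S^\times$ isometrically onto $\ker(\Psi_S)$ under the $\mathbb{Z}^{|S|}$ inner product, so the two lattices have the same Gram matrix and hence the same determinant. Once this identification is in place, the remainder of the argument is the routine LLL estimate above.
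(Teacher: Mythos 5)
Your proof is correct and follows essentially the same route as the paper's: the LLL product bound $\prod_i \|b_i\|_2 \le 2^{(|S|-1)(|S|-2)/4}\det\Lambda_S'$, then passing from the product to a single row using $\|v_j\|_2 \ge 1$ for nonzero integer vectors, and finally $\infnorm{M_S} \le \max_i\|v_i\|_2$. You are in fact slightly more careful than the paper, which leaves implicit both the $\|v_j\|\ge 1$ step and the identification of $\ker(\Psi_S)$ with $\Lambda_S'$ as sublattices of $\mathbb{Z}^{|S|}$.
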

\begin{proof}
The rows 
of $M_S$ form an LLL-reduced basis of $\Lambda_S'$. Denoting by $(M_S)_j$ the $j$-th row of $M_S$, 
we have
\begin{equation*}
    \prod_{j=1}^{\vert S \vert -1} \norm{(M_S)_j} \le 2^{(\vert S \vert -1)(\vert S \vert -2)/4}\det\Lambda_S' = 2^{(\vert S \vert -1)(\vert S \vert -2)/4} R_S'
\end{equation*}
by \cite[Proposition 1.6]{LLL_original}. Chose $j$ such that $\infnorm{M_S}$ is taken on by an element in the $j$-th row. Then 
\[ \infnorm{M_S} = \infnorm{(M_S)_j} \le  \prod_{j=1}^{\vert S \vert -1} \infnorm{(M_S)_j}.\qedhere \]
%
\end{proof}

\begin{cor}\label{cor:bound_LLLbasis_of_Unit_lattice}
Let 
$\mathcal{B}_S=\{b_1, \dots, b_{\vert S\vert-1} \}$ be a LLL-reduced basis of 
$\Lambda_S$. Then
\[
\max_{1\le i \le \vert S\vert-1} \norm{b_i}_\infty \le 2^{(\vert S \vert -1)(\vert S \vert -2)/4}R_{S}.
\]
\end{cor}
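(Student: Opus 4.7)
My plan is to mirror the proof of Proposition \ref{prop:MatrixMS_entry_bound} essentially verbatim, with $\Lambda_S$ replacing $\Lambda_S'$ and $R_S$ replacing $R_S'$. The structural ingredients needed are already in place: $\Lambda_S$ is a lattice over $\mathbb{Z}$ of rank $|S|-1$ (as noted in Section \ref{ss:unit-lattices}) with determinant $R_S$, and it is a sublattice of $\mathbb{Z}^{|S|}$ since each coordinate of an image $\Phi_S(u)$ is an integer of the form $-v_{P_i}(u)\deg P_i$. So every $b_i$ is a nonzero integer vector, in particular $\|b_i\|_2 \ge 1$ and $\|b_i\|_\infty \le \|b_i\|_2$.

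First I would invoke the standard LLL bound \cite[Proposition 1.6]{LLL_original} applied to the LLL-reduced basis $\mathcal{B}_S$ of $\Lambda_S$, which gives
\[
\prod_{i=1}^{|S|-1} \|b_i\|_2 \;\le\; 2^{(|S|-1)(|S|-2)/4}\det\Lambda_S \;=\; 2^{(|S|-1)(|S|-2)/4}R_S.
\]
Next I would pick an index $j$ for which $\max_i \|b_i\|_\infty$ is attained by $b_j$. Using $\|b_j\|_\infty \le \|b_j\|_2$ and the fact that $\|b_i\|_2 \ge 1$ for every $i \ne j$ (since each $b_i$ is a nonzero integer vector), I would chain
\[
\max_{1 \le i \le |S|-1} \|b_i\|_\infty \;=\; \|b_j\|_\infty \;\le\; \|b_j\|_2 \;\le\; \prod_{i=1}^{|S|-1}\|b_i\|_2 \;\le\; 2^{(|S|-1)(|S|-2)/4}R_S,
\]
yielding the claim.

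There is no real obstacle here; the only subtlety worth flagging is the integrality of $\Lambda_S$, which justifies the step $\|b_i\|_2 \ge 1$ used to absorb the other factors of the product. This is immediate from the definition of $\Phi_S$, since both the valuations $v_{P_i}(\alpha)$ and the place degrees $\deg P_i$ are integers. Everything else is a direct transcription of the preceding proposition's proof.
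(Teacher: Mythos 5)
Your proof is correct and matches the paper's intent exactly: the corollary is the verbatim analogue of Proposition \ref{prop:MatrixMS_entry_bound} with $\Lambda_S$ and $R_S$ in place of $\Lambda_S'$ and $R_S'$, and the paper derives it by the same chain (LLL product bound, then isolating the maximal row and absorbing the remaining factors via integrality). Your explicit flagging of the step $\norm{b_i}\ge 1$ is a welcome clarification of a point the paper's proposition proof leaves implicit.
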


By \cite[Prop.\ 14.1 (a)]{StichtenothBook}, we have $R_S' = |Cl^0(F)/Cl_S(F)| \le Cl^0(F)$, where $Cl_S(F)$ is the $S$-class group of $F$, i.e.\ group of divisors supported only outside $S$ modulo principal divisors. 
Using the Hasse-Weil bound $|Cl^0(F)| \le (\sqrt{q}+1)^{2g}$, we obtain 
\begin{equation}\label{ineq:RS'boundq^2g}
    R_S' \le R_F\le (\sqrt{q}+1)^{2g}.
\end{equation}

\section{Compact representation} 

An alternative to a standard representation of $\alpha \in F$, i.e.\ given in terms of a $k(x)$-basis of $F$, is to write $\alpha$ as a tuple of small elements in $F$ such that a suitable power product of these elements evaluates to $\alpha$. Such a compact representation is particularly well suited for elements $\alpha$ of small norm and a highly useful tool for solving norm equations. We follow the treatment of this subject in \cite{EisentragerHallgren} 
and assume throughout this section that $F/k(x)$ has an infinite place of degree 1. 

Write $P_\infty(F) = \{ P_{\infty_1}, \ldots, P_{\infty,r+1} \}$ with $\deg P_{\infty,r+1} = 1$. It will be helpful to define for any $\alpha \in F$ the $r$-tuple 
\[ \text{val}_\infty(\alpha) = (v_{P_{\infty,1}}(\alpha) , \ldots , v_{P_{\infty,r}}(\alpha)) \in \mathbb{Z}^r . \]
Let $v  = (v_1, \ldots , v_r) \in \mathbb{Q}^r$. Then $\alpha$ is said to be \emph{close} to $v$ if
%
%
\[ \sum_{i=1}^r \lvert v_{P_{\infty,i}}(\alpha) - v_i \rvert \le r+g. \]

Riemann-Roch spaces and minima play a key role in computing compact representations. The \emph{Riemann-Roch space} of a divisor $D$ of $F$ is the finite dimensional $k$-vector space
\[ L(D) := \{\alpha \in F^\times \vert \dvs(\alpha) \ge -D\} \cup \{0\} . \]
For a fractional $O_F$-ideal $I$, the \emph{divisor of $I$} is $\text{div}(I) = \sum_{P \in P_0(F)} n_P P$, where $n_P = v_{\mathfrak{p}}(I)$ and $\mathfrak{p}$ is the $O_F$-prime ideal corresponding to the place $P \in P_0(F)$. A non-zero element $\mu \in I$ is a \emph{minimum of $I$} if the following hold. If $\alpha \in I$ is non-zero such that $v_P(\alpha) \ge -v_P(\mu)$ for all $P \in P_\infty(F)$, then either $\alpha = 0$ or $v_P(\alpha) = v_P(\mu)$ for all $P \in P_\infty(F)$. In other words, if $D = \dvs(I)-\sum_{P \in P_\infty(F)} v_P(\mu)P$, then every $\alpha \in L(D)$ is either 0 or $v_P(\alpha) = v_P(\mu)$ for all $P \in P_\infty(F)$. A fractional $O_F$-ideal $I$ is \emph{reduced} if 1 is a minimum of $I$. For any fractional $O_F$-ideal $I$ and any minimum $\mu$ of $I$, the ideal $(\mu^{-1})I$ is reduced. For a reduced ideal $I$, we have $0 \le \deg \dvs(I) \le g$.


We now have all the ingredients to introduce compact representations. 

\begin{defn}{\cite[Definition 4.3]{EisentragerHallgren}} \label{def:CR}
A \emph{compact representation} of $\alpha \in F$ is a pair $\mathbf{t}_\alpha = \big (\mu, (\beta_1, \beta_2, \cdots, \beta_l) \big )$ where 
\begin{itemize}
    \item $l \le \log(\infnorm{\text{val}_\infty (\alpha)}+g)$,
    \item $\beta_1, \ldots \beta_l \in F$ such that $\displaystyle \beta= \prod_{i=1}^l \beta_i ^{2^{l-i}}$ is a minimum of $O_F$, 
    \item $\mu \in F$ satisfies $\alpha = \mu/\beta$,
    \item The number of bits required to represent $\mu$ is polynomial in $\log q$, $n$ and $\deg(\textrm{Norm}_{F/k(x)}(\alpha))$;
    \item The number of bits required to represent each $\beta_i$ is polynomial in $\log q$ and~$n$.
\end{itemize}
\end{defn}
This implies in particular that given a compact representation $\mathbf{t}_\alpha = (\mu,  (\beta_1, \dots, \beta_l))$ of $\alpha \in F$, we have
\begin{equation*}\label{eq:defn_comprep}
\alpha = \mu \prod_{i=1}^{l} \left(\frac{1}{\beta_i}\right)^{2^{l-i}}, 
\end{equation*}
where $l, \mu, \beta_1, \ldots , \beta_l$ are all small.

Eisentrager and Hallgen provided an algorithm for computing a compact representation of $\alpha \in F$ in \cite{EisentragerHallgren} which we reproduce here in more streamlined form. The algorithm first finds $\mu$ by computing a basis of a suitable Riemann-Roch space (Algorithm \ref{alg:reduce}) and then computes $\beta_1, \ldots , \beta_l$ via a square-and-multiply approach similar to binary exponentiation (Algorithm \ref{alg:comprep}). 

\begin{algorithm}[H]
    \caption{Reduce  \cite[Algorithm 3.4]{EisentragerHallgren}}\label{alg:reduce}
\begin{algorithmic}[1] 
\REQUIRE A fractional $O_F$-ideal $I$ and a vector $v=(v_1, v_2, \cdots, v_{r})\in \mathbb{Z}^r$
\ENSURE a minimum $\gamma$ of $I$ that is close to $v$ 
    \STATE $D\leftarrow \dvs(I)+\sum_{i=1}^{r} v_i P_{\infty,i}$
    \FOR {$\ell \in [-\deg D , -\deg D + g]$}
    \IF {$L(D+\ell P_{\infty,r+1})$ is not trivial}
    \STATE Pick $\gamma$ from a basis of $L(D+\ell P_{\infty,r+1})$.
    \RETURN $\gamma$
    \ENDIF
    \ENDFOR 
    \end{algorithmic}
\end{algorithm}

To find $\mathbf{t}_\alpha$, we first use Algorithm \ref{alg:reduce} to find a minimum $\mu$ close to 0 in $A = \alpha O_F$. Let $\mathbf{v} = \log_2\infnorm{\text{val}_\infty(\mu)-\text{val}_\infty (\alpha)}$ and put $l = \lfloor \mathbf{v}\rceil + 1$, where $\lfloor \cdot \rceil$ rounds to the nearest integer, rounding up in case of a tie. In the $i$-th iteration, given $(\beta_1, \ldots , \beta_{i-1})$ from previous iterations, we compute a minimum $\beta_i$ of the fractional $O_F$-ideal $\beta_{(i)}^{-2} O_F$ close to $\mathbf{v}/2^{l-i} - 2 \, \mbox{val}_\infty(\beta_{(i)})$, where $\beta_{(i)} = \prod_{j=1}^{i-1} \beta_j^{2^{i-j}}$. 


\begin{algorithm}   \caption{CompRep \cite[Algorithm 4.6]{EisentragerHallgren}}\label{alg:comprep}
\begin{algorithmic}[1]
\REQUIRE A principal ideal $A=\alpha O_F$ and the vector $\text{val}_\infty (\alpha)
\in \mathbb{Z}^r$    
\ENSURE A compact representation of $\alpha$
    
    \STATE $\mu \leftarrow $Reduce(A,$\vec{0}$)
    \STATE $l\leftarrow \lfloor \log_2 \norm{\text{val}_\infty (\mu)-\text{val}_\infty(\alpha)}_\infty \rceil +1$
    \STATE $B\leftarrow 1\cdot O_F$, $\beta_0 \leftarrow 1$, $v_\beta \leftarrow \vec{0}\in \mathbb{Z}^{r}$
    \FOR {$i\in \{1, \dots, l\}$} 
    \STATE $t\leftarrow \begin{bmatrix} \left\lfloor (v_{P_{\infty,1}}(\mu) - v_{P_{\infty,1}}(\alpha))/2^{l-i} \right\rceil  & \dots &  \left\lfloor (v_{P_{\infty,r}}(\mu) - v_{P_{\infty,r}}(\alpha))/2^{l-i} \right\rceil \end{bmatrix}^T $
    \STATE $B\leftarrow 1/\beta_{i-1}^2 B^2$
    \STATE $\beta_i \leftarrow $Reduce(B, $t-2 v_{\beta}$)
    \STATE $v_\beta \leftarrow 2v_\beta + \text{val}_{\infty}(\beta_i)$
    \ENDFOR
    \RETURN $(\mu, (\beta_1, ... \beta_l))$;
    \end{algorithmic}
\end{algorithm}

Given the multiplicative structure of compact representations, it is relatively straightforward to devise algorithms for computing compact representations of products, powers and norms of elements given in compact representation. It is also easy to find the value at any place $P \in P(F)$ of an element in compact representation, ascertain whether an element in compact representation belongs to $O_F$ (by checking that all its values at the finite places are non-negative), and determine whether two elements in compact representation are associate (by comparing their values at all the finite places in their support). We omit the details here and refer to \cite[Section 2.4.2]{Leem_thesis} for explicit descriptions of these algorithms.

\section{Solving norm equations}\label{sec:algdesc}

In this section, we first describe several techniques for solving norm equations, beginning with the only method found in previous literature, due to Ga\'{a}l and Pohst~\cite{GaalPohstnorm}. Then we present two new algorithms for accomplishing this task. The method in Section \ref{subsec:PGCR_description} improves on the exhaustive search approach taken by Ga\'{a}l-Pohst and incorporates compact representations. Section \ref{sec:PIGCRalgdesc} introduces a new algorithm that uses index calculus techniques and also makes use of compact representations.

\subsection{Ga\'{a}l-Pohst}

The idea of this method is to look for all non-associate solutions of \eqref{eq:ne_defn} in a certain region and check that each solution candidate has the correct norm. Since the search space is not explicitly given in \cite{GaalPohstnorm}, we describe it here. 

Suppose $\alpha$ is a solution of (\ref{eq:ne_defn}), given in standard representation with respect to a reduced basis $\mathcal{B} = \{ \omega_1, \ldots , \omega_n \}$ of $F/k(x)$.  Then
\begin{equation}\label{eq:alphaformredbasis}
    \alpha=\sum_{i=1}^n \lambda_i \omega_i,
\end{equation}
where $\lambda_i\in k[x]$ for $1\le i \le n$. Since $\mathcal{B}$ is a reduced basis, we have
\begin{equation*}\label{eq:max_norm_alpha_max_qdeglambdaomega}
    \infnorm{\alpha} = \max_{1\le i \le n}\{{\deg(\lambda_i)}+\infnorm{\omega_i}\},
\end{equation*}
so $\deg(\lambda_i) \le  \infnorm{\alpha} -\infnorm{\omega_i}$ for $1 \le i \le n$. Assume that $\alpha$ is minimal among its associate elements with respect to the maximum norm. Then an upper bound on $\infnorm{\alpha}$ produces a degree bound on $\lambda_i$ which yields a finite space that we can search for non-associate solutions of \eqref{eq:ne_defn} in $O_F$.

Let $\epsilon_1, \ldots, \epsilon_r$ be a system of fundamental units of $F$ and $P \in P_\infty(F)$. For any solution $\beta \in O_F$ of \eqref{eq:ne_defn}, there exist $x_1, \ldots , x_r \in \mathbb{R}$ such that
\begin{equation*}
v_P(\alpha) = \sum_{j=1}^{r} x_j v_P(\epsilon_j)+\frac{1}{n}v_P(c)\\
= \sum_{j=1}^{r} x_j v_P(\epsilon_j)-\frac{e_P}{n} \deg(c),
\end{equation*}
where $e_P$ is the ramification index of $P$. This identity is given in \cite[p.\ 244]{GaalPohstnorm} without proof, but can be derived by adapting the reasoning in \cite[Sections 5.3 and 6.4]{PohstZassenhaus} from number fields to function fields; for details, see \cite[Lemma 3.1]{Leem_thesis}.

Now put $\alpha = \beta \prod_{j=1}^r \epsilon_j ^{-\lfloor x_j\rceil}$. Then $\alpha$ is associate to $\beta$. Since $1/2 \ge a - \lfloor a \rceil \ge -1/2$ for all $a \in \mathbb{R}$, a simple calculation yields
\begin{equation} \label{eq:alphacoeff}
\theta_P - \frac{e_P}{n} \ge v_P(\alpha) \ge -\theta_P -\frac{e_P}{n}\deg(c) \quad \mbox{ where } \theta_P = \frac{1}{2} \sum_{j=1}^r | v_P(\epsilon_j)| .
\end{equation}
The lower bound implies 
\begin{equation} \label{eq:bound_for_max_norm_alpha}
\infnorm{\alpha} \le \Theta \quad \mbox{ where } \Theta=\max_{P \in P_\infty(F)} \left\{\frac{\theta_P}{2e_P} \right\}+\frac{1}{n}\deg(c),
\end{equation}
\begin{equation} \label{eq:degboundoflambdai}
    \deg \lambda_i \le \Theta - \infnorm{\omega_i} \quad \mbox{ for $1 \le i \le n$}. 
\end{equation}

We can invoke Algorithm \ref{alg:Sunitlat} to compute the values of a system of fundamental units at the infinite places of $F$ and compute the bounds given in \eqref{eq:degboundoflambdai} for $1 \le i \le n$. Then we compute the norm of every $\alpha$ of the form (\ref{eq:alphaformredbasis}) such that the coefficients~$\lambda_i$ satisfy \eqref{eq:degboundoflambdai} and only retain $\alpha$ if its norm is equal to $c$ up to a multiple in $k^\times$. Once all solutions have been found, we remove associate solutions via the procedure described at the end of Section~\ref{sec:FFandNE}. 

\subsection{Improved exhaustive search algorithm}\label{subsec:PGCR_description}

In this section, we describe a new exhaustive search algorithm for solving norm equations which makes use of compact representations. We assume $\deg P_{\infty, r+1} = 1$. Let
\[ S_{c,0} = \{ P \in P_0(F) \mid v_P(c) \ne 0 \} \quad \mbox{and} \quad S_c = S_{c,0} \cup P_\infty(F) \ . \]
Then every solution $ \alpha \in O_F$ of \eqref{eq:ne_defn} is an $S_c$-unit. So if we can bound the values $v_P(\alpha)$ for all $P \in S_c$, then we can search the region defined by these bounds for solutions. 

Bounds on $v_P(\alpha)$ for $P \in P_\infty(F)$ are given in \eqref{eq:alphacoeff}. Note that the quantities $\theta_P$ can easily be obtained from the unit value matrix $M_{P_\infty(F)}$ computed in Algorithm~\ref{alg:Svalmat}. To obtain bounds on $v_P(\alpha)$ for $P \in P_0(F)$, 
write \eqref{eq:ne_defn} in the form $\alpha\beta = c$ where $\beta 
= \textrm{Norm}_{F/k(x)}(\alpha)/\alpha \in O_F$, which implies  
$0 \le v_P(\alpha) 
\le v_P(c)$ for all $P \in P_0(F)$.

We use these bounds to form inputs for Algorithm \ref{alg:comprep} to compute compact representations of solution candidates of \eqref{eq:ne_defn}. Write 
\[ S_{c_0} = \{ P_1, \ldots , P_{|S_{c,0}|} \} \ , \qquad P_\infty(F) = \{ P_{\infty,1}, \ldots , P_{\infty, r+1} \}. \]
The solutions of \eqref{eq:ne_defn}, up to associates, are in one-to-one correspondence with the principal ideals $\alpha O_F$ dividing $cO_F$. For $1 \le i \le |S_{c,0}|$, let $\mathfrak{p}_i$ be the prime ideal corresponding to $P_i \in S_{c,0}$. Then all the integral $O_F$-ideals dividing $cO_F$ are of the form 
\begin{equation*}\label{eq:PGCRformingI}
I= \prod_{i=1}^{|S_{c,0}|} \mathfrak{p}_i ^{v_i},
\end{equation*}
where $0\le v_i\le v_{P_i}(c)$ for $1 \le i \le |S_{c,0}|$. If $I$ is principal, say $I = \alpha O_F$, then $v_P(\alpha)$ satisfies the bounds \eqref{eq:alphacoeff} for all $P \in P_\infty(F)$. An additional constraint is given by the fact that the principal divisor of $\alpha$ has degree zero. In other words, we only need to consider tuples $(v_1, \ldots , v_{|S_{c,0}|})$ and $(v_{\infty,1}, \ldots , w_{\infty, r+1})$ such that 
\begin{equation} \label{eq:finite-values}
    0 \le v_i \le v_{P_i}(c) \quad \mbox{for } 1 \le i \le |S_{c,0}|,
\end{equation}
\begin{equation} \label{eq:infinite-values}
    -\theta_{P_{\infty,i}} - \frac{e_{P_{\infty,i}}}{n} \deg(c) \le v_{\infty,i} \le \theta_{P_{\infty,i}} - \frac{e_{P_{\infty,i}}}{n} \deg(c)  \quad \mbox{for } 1 \le i \le r+1, 
\end{equation}
\begin{equation} \label{eq:PGCR_deg0div}
\sum_{i=1}^{|S_{c,0}|} v_i \deg Q_i + \sum_{i=1}^{r+1} v_{\infty,i} \deg P_{\infty,i} = 0 . 
\end{equation}
These conditions are necessary, but not sufficient, for $\alpha$ to be a solution of \eqref{eq:ne_defn}. Nevertheless, the constraint \eqref{eq:PGCR_deg0div} in particular significantly cuts down the number of compact representations that need to be computed.  

For every pair $(I, V_\infty)$, with $V_\infty = (v_{\infty_1}, \ldots , v_{\infty,  r+1})$, that satisfies these conditions, we compute a compact representation $\mathbf{t} =$ CompRep$(I, V_\infty)$. We then test that $\mathbf{t}$ represents an element in $O_F$ and that this element has the correct norm. 
We discard $\mathbf{t}$ if it represents an element that is associate to a solution already found. Algorithm \ref{alg:PGwithCR} shows the whole process.

\begin{algorithm}    \caption{Solving \eqref{eq:ne_defn} via improved exhaustive search}
\label{alg:PGwithCR}
    \begin{algorithmic}[1]

\REQUIRE $c \in k[x]\setminus \{0\}$, a reduced basis $\mathcal{B} = \{\omega_i \vert 1\le i \le n\}$ of $O_F$, a unit value matrix $M_{{P}_\infty(F)} = (m_{i,j})$ 
\ENSURE A set $\mathcal{R}$ of all non-associate solutions of the equation (\ref{eq:ne_defn}) in compact representation
    \STATE $\mathcal{R}\leftarrow \emptyset$
    \STATE $S_{c,0}\leftarrow \{P\in{P}_0(F) \vert v_P(c)\ne 0 \}$    
    \STATE $ v_l\leftarrow \begin{bmatrix} \displaystyle \frac{1}{2} \sum_{i=1}^{r} \vert m_{i,1} \vert &  \dots & \displaystyle \frac{1}{2} \sum_{i=1}^{r} \vert m_{i,r+1} \vert\end{bmatrix}$ \medskip
    \STATE $v_c 
    \leftarrow \begin{bmatrix}\displaystyle \frac{e_{P_{\infty, 1}}}{n}\deg(c) & \dots & \displaystyle\frac{e_{P_{\infty, r+1}}}{n}\deg(c) \end{bmatrix}$ \medskip
    \FOR {$(v_1, \dots, v_{\vert S_{c,0}\vert })$  where $0\le v_i\le v_{P_i}(c)$} 
    \FOR {$V_\infty= ( v_{\infty, 1}, \dots, v_{\infty, r+1})$ where $-(v_l)_i -(v_c)_i \le v_{\infty, i} \le (v_l)_i - (v_c)_i$}
    \IF {$\displaystyle \sum_{i=1}^{\vert S_{c,0}\vert} v_i \deg P_i + \sum_{j=1}^{r+1} v_{\infty, j } \deg P_{\infty, j} = 0$}
    \STATE $\mathbf{t} \leftarrow $ CompRep$(I, V_\infty)$
    \IF {$\mathbf{t} \in O_F$ and $\mathbf{t}$ represents an element of norm $\zeta c$ with $\zeta \in k^\times$}
    \IF {$\mathbf{t}$ represents an element that is is not associate to $\alpha$ for any $\mathbf{t}_\alpha \in \mathcal{R}$}
    \STATE $\mathcal{R}\leftarrow \mathcal{R}\cup \{\mathbf{t}\}$
    \STATE \algorithmicbreak   $~V_\infty$
    \ENDIF
    \ENDIF
    \ENDIF
    \ENDFOR
    \ENDFOR
    \RETURN $\mathcal{R}$    \end{algorithmic}
\end{algorithm}

\subsection{Index calculus}\label{sec:PIGCRalgdesc}

In this section, we describe a new exhaustive search algorithm for solving norm equations which also makes use of compact representations, so we assume again that $\deg P_{\infty, r+1} = 1$.  Unlike the previous exhaustive search techniques, which enumerate all elements within a large search region, this algorithm enumerates ideals $I$ that divide $cO_F$ and conducts principal ideal tests by solving matrix equations involving a precomputed $S$-unit value matrix. Using the solutions of the matrix equations, we compute compact representations of solutions of \eqref{eq:ne_defn}.
%
%

The solutions $\alpha$ of \eqref{eq:ne_defn}, up to associates, are in bijection with the principal ideals $\alpha O_F$ of norm $c\zeta$ with $\zeta \in k^\times$. By \eqref{eq:finite-values}, any such ideal must necessarily divide $c O_F$. So in order to find all solutions, it suffices to consider $O_F$-ideals $I$ that divide $c O_F$. If $I$ is principal and has the correct norm, then a generator of $I$ is a solution of \eqref{eq:ne_defn}. 
%


Let $S_c$ and $S_{c,0}$ as in Section~\ref{subsec:PGCR_description}. In order to enumerate all ideals $I$ that divide~$cO_F$, we factor $cO_F$ as 
\begin{equation*} 
c O_F = \prod_{i=1}^{|S_{c,0}|} \mathfrak{p}_i^{v_{P_i}(c)},  
\end{equation*}
where for each $i$, $\mathfrak{p}_i$ is the $O_F$-prime ideal corresponding to the place $P_{i} \in S_{c,0}$. Then we perform principal ideal tests on all ideals $I$ dividing $cO_F$, which are precisely of the form 
$$I=\prod_{i=1}^{\vert S_{c,0}\vert}\mathfrak{p}_i ^{v_{P_i}(I)},$$
such that $0 \le v_{P_i}(I) \le v_{P_i}(c)$ for all $1\le i \le \vert S_{c,0}\vert$, where $v_{P_i}(I) = v_{\mathfrak{p}_i}(I)$. 
%
%

There is a principal ideal test, implemented in Magma, which is an index calculus algorithm that uses Hess's randomized relation search algorithm \cite[Algorithm 5.5]{hessphd}. This algorithm finds a factorization of an ideal equivalent to $I$ by searching relations. When $I$ is principal, the algorithm returns a generator in ``factored form". The factored form has subexponentially many terms, each of which has subexponential size in the size of inputs. In our context, the prime ideal factorization of $I$ is already known, and we wish to compute a compact representation of a generator of $I$ if $I$ is principal. Thus, instead of using the existing algorithm, we solve a matrix equation for each $I$ to determine whether or not $I$ is principal and to derive inputs for computing a compact representation. 


Let $\{\epsilon_1, \epsilon_2, \dots, \epsilon_{|S_c|-1}\}$ be a system of fundamental $S_c$-units. 
Every solution $\alpha$ of \eqref{eq:ne_defn} is an $S_c$-unit, so there exist integers $x_i \in \mathbb{Z}$ such that
\begin{equation}  \label{eq:alpha_as_Sunit}
\alpha= \prod_{i=1}^{|S_{c}|-1} \epsilon_{i}^{x_i} .
\end{equation}
%
We form a matrix $M_{S_c,0}$ from the columns of the $S_c$-value matrix $M_{S_c}$ that correspond to the places in $S_{c,0}$. Here, $M_{S_c}= $ SValMat($S_{c}$) is precomputed using Algorithm \ref{alg:Svalmat}.

Now consider the matrix equation
\begin{equation}\label{eq:matrixeq_NE3princ}
\begin{bmatrix}
v_{P_1}(\epsilon_1) & v_{P_1}(\epsilon_2) & \dots & v_{P_1}(\epsilon_{\vert {S_c}\vert -1})\\ 
v_{P_2}(\epsilon_1) & v_{P_2}(\epsilon_2) & \dots & v_{P_2}(\epsilon_{\vert {S_c}\vert -1})\\ 
 \vdots & \vdots & \ddots & \vdots \\ 
v_{P_{\vert S_{c,0}\vert}}(\epsilon_1) & v_{P_{\vert S_{c,0}\vert}}(\epsilon_2) & \dots & v_{P_{\vert S_{c,0}\vert}}(\epsilon_{\vert {S_c}\vert -1}) \end{bmatrix}\begin{bmatrix}
x_1\\ 
x_2\\ 
\vdots\\ 
x_{\vert {S_c}\vert -1}
\end{bmatrix}= 
\begin{bmatrix}
v_{P_1}(I)\\ 
v_{P_2}(I)\\ 
\vdots\\ 
v_{P_{\vert S_{c,0}\vert}}(I)
\end{bmatrix}.
\end{equation}
It is easy to verify that \eqref{eq:matrixeq_NE3princ} has a solution $\begin{bmatrix} x_1 & \dots & x_{\vert {S_c}\vert -1} \end{bmatrix}^T$  if and only if $I$ is principal, with a generator given by \eqref{eq:alpha_as_Sunit}.
%
%
%
Any such solution gives rise to infinitely many solutions 
as $M_{S_c,0}$ has fewer rows than columns. 
However, any two solutions of \eqref{eq:matrixeq_NE3princ} correspond to associate solutions of \eqref{eq:ne_defn}. So for any $O_F$-ideal~$I$ dividing~$cO_F$, we need only find one solution of \eqref{eq:matrixeq_NE3princ}, and this should be the one that gives rise a solution $\alpha_0$ of \eqref{eq:ne_defn} whose norm $\infnorm{\alpha_0}$ is small or even minimal. We proceed as follows.

First, choose a solution $\mathcal{X}=\begin{bmatrix} x_1 & \dots & x_{\vert S_{c}\vert-1} \end{bmatrix}^T$ of \eqref{eq:matrixeq_NE3princ}
that corresponds to $\alpha =\prod_{i=1}^{\vert {S_c}\vert -1} \epsilon_i^{x_{i}}$. Then compute the vector $v_{\alpha, \infty}$ of the values of $\alpha$ at the infinite places by 
\begin{equation*} 
    v_{\alpha, \infty} = M_{S_c,\infty}^T \mathcal{X},
\end{equation*}
where $M_{S_c,\infty}$ is the matrix consisting of the columns of $M_{S_c}$ that are not in $M_{S_c,0}$, i.e.\ correspond to the infinite places of $F$. Note that even if $\mathcal{X}$ is short in the Euclidean norm, $\infnorm{v_{\alpha, \infty}}$  could still be very large. So 
%
we compute a vector $v_0$ in the unit lattice that is close to $v_{\alpha, \infty}$ with respect to the Euclidean norm and form the vector 
\begin{equation*} 
    v_{\alpha_0, \infty}=v_{\alpha, \infty}-v_0.
\end{equation*}
This vector is short in the Euclidean norm and thus corresponds to a generator $\alpha_0$ of $I$ such that $\infnorm{\alpha_0}$ is small. From $I$ and $v_{\alpha_0, \infty}$, we can compute a compact representation $\mathbf{t}=$ CompRep($I$, $v_{\alpha_0, \infty}$) of $\alpha_0$. If it has norm $c$, it represents a solution of~(\ref{eq:ne_defn}). 

\begin{algorithm} 
    \caption{Solving norm equations via index calculus}
    \label{alg:index_calculus}
\begin{algorithmic}[1]\label{alg:princwithCR}
    \REQUIRE $c \in k[x]\setminus k$, , the maximal order $O_F$ of $F$, an $S_c$-unit value matrix $M_{S_c}$
    \ENSURE A set $\mathcal{R}$ of all non-associate solutions of (\ref{eq:ne_defn}) that are in $O_F$ in compact representation.
    \STATE $\mathcal{R}\leftarrow \emptyset$
    \STATE $S_{c,0}\leftarrow \{P\in {P}_0(F) \vert v_P(c)\ne 0 \}$
    \STATE $M_{S_c,0} \leftarrow$ the matrix of the columns of $M_{S_c}$ corresponding to the places in $S_{c,0}$
    \STATE $M_{S_c,\infty} \leftarrow$ the matrix of the columns of $M_{S_c}$ corresponding to the places in ${P}_\infty(F)$
    \STATE $M_{{P}_\infty(F)}
    \leftarrow$SValMat(${P}_\infty(F)$)
    \FOR {every $I \vert c O_F$ such that $I=\prod_{i=1}^{\vert S_c \vert } \mathfrak{p}^{v_{P_i}(I)}$}
    \IF {$\textrm{Norm}_{F/k(x)}(I)$/$c \in  k  ^\times$,}
    \IF {\eqref{eq:matrixeq_NE3princ} is consistent,}
    \STATE $\mathcal{X} \leftarrow$ a solution of \eqref{eq:matrixeq_NE3princ} that is short in the Euclidean norm
    \STATE $v_0\leftarrow$ a vector in the lattice generated by the rows of $M_{{P}_\infty(F)}$, that is closest to $M_{S_c,\infty}^T \mathcal{X}$
    \STATE $v \leftarrow M_{S_c,\infty}^T \mathcal{X} -v_0$
    \STATE $\mathbf{t} \leftarrow$ CompRep$(I, v)$
        \STATE $\mathcal{R}\leftarrow \mathcal{R}$ $\cup$ $\{\mathbf{t}\}$
    \ENDIF    
    \ENDIF\ENDFOR
    \RETURN $\mathcal{R}$
    \end{algorithmic}
\end{algorithm}

In Example \ref{ex:all},
we compute the search space of each algorithm for the same norm equation and compare the run times. The computation was performed on an Intel Xeon CPU E7-8891 v4 with 80 64-bit cores at 2.80GHz with the help of Magma.

\begin{exmp}\label{ex:all}
Let $F/\mathbb{F}_5(x)$ be an extension of degree $n=3$ defined by a root of
\[f(t) = t^3 + (4x^3 + 3x^2 + 1)t^2 + (3x^3 + 4x^2 + 4x + 2)t + 2x^3 + x.\]
Then 
$F/k(x)$ has two infinite places $P_{\infty,1}$ and $ P_{\infty, 2}$ with ramification indices $e_{P_{\infty, 1}}=e_{P_{\infty, 2}}=1$,
and hence unit rank $r=1$.

Let $c=x+4$. The prime ideal factorization of $cO_F$ is $cO_F=\mathfrak{p}_1\mathfrak{p}_2$, where these two prime ideals correspond to two finite places $P_1$ and $P_2$ with $v_{P_1}(c)=v_{P_2}(c)~=~1$. 

The search space of Ga\'{a}l-Pohst is determined by the degree bounds in \eqref{eq:degboundoflambdai}. In this example, we have
%
$\deg \lambda_1 \le 347+\frac{1}{3} $, $\deg \lambda_2 \le 344+\frac{1}{3} $, and $ \deg \lambda_3 \le 344+\frac{1}{3}.$ 
%
Thus, a search for solutions requires computing the norms of $5^{348+345+345}=5^{1038}$ elements in $O_F$. 

The search space of Algorithm \ref{alg:PGwithCR} is the number of compact representations computed in Step 8, which is equivalent to the number of tuples satisfying \eqref{eq:finite-values}, \eqref{eq:infinite-values}, and \eqref{eq:PGCR_deg0div}. In our case, we need to find tuples $(v_1, v_2, v_{\infty, 1}, v_{\infty, 2})$. To satisfy the degree bound in \eqref{eq:PGCR_deg0div}, we only need to choose the first 3 numbers to determine the tuple. Since we have $0\le v_1, v_2\le 1$ and $-347-\frac{1}{3}\le v_{\infty_1}\le 347-\frac{1}{3}$, we have up to $2\cdot2\cdot(347\cdot2+1) = 2980$ possible tuples which is much less than the search bound for Ga\'{a}l-Pohst. 

Lastly, the number of ideals to enumerate in Algorithm \ref{alg:princwithCR} is the number of pairs $(v_1, v_2)$. With the same bounds on $v_1, v_2$ above, we only need to search 4 ideals which is significantly less than the previous two algorithms.

The search of Ga\'{a}l-Pohst did not finish within 4 days, so we terminated the computation. Our improved exhaustive search algorithm only took 114.830 CPU seconds, and the index calculus algorithm only took 0.180 CPU seconds for the entire process.
\end{exmp}

\section{Complexity Analysis}

In this section, we analyze the complexity of the compact representation and norm equation algorithms. 
%
Throughout, $F/k(x)$ is represented by a monic irreducible polynomial $f(t) = t^n + a_{n-1} t^{n-1} + \cdots + a_0 \in k[x][t]$. The size of this representation is captured by the quantity 
\begin{equation}\label{eq:definition_Cf}
    C_f = \max \left \{ \left \lceil \frac{\deg a_i (x)}{i} \right \rceil \bigg\vert~~ 1\le i \le n\right \}.
\end{equation}
Note that $C_f = \bigO(g)$ when $g \rightarrow \infty$; see \cite[Corollary 3.5]{Jens_Genus}. Except for the original Ga{\'a}l-Pohst method, we assume that $F/k(x)$ has a place of degree one. 

Our asymptotic run times count bit operations and are expressed as functions of $q = |k|$, $n = [F:k(x)]$, $g$ (the genus of $F$), and the sizes of the inputs specific to each algorithms. Some complexity estimates include other quantities, such as the regulator $R_F$ or the unit rank $r$ of $F/k(x)$. If all these quantities are present, $O()$ constants should be understood as true constants. Later, we will consider asymptotics where one of $q$, $n$, $g$ and $\deg(c)$ (in the case of solving norm equations~\eqref{eq:ne_defn}) grows and the others are assumed to be fixed; these $O()$ constants will then depend on the fixed quantities.  For any quantity $X$, we simplify any power of $\log X$ to writing $X^{\td}$.


For basic arithmetic ingredients, we assume the following complexities:

\begin{itemize}
    \item Multiplication of two elements in $k$: $O((\log q)^{1+\td}) = O(q^{\td})$ \cite{poly_mult};
    \item Multiplication of two polynomials in $k[x]$ of degree $d$: $\bigO(d^{1+\td} q^{\td} )$ \cite{poly_mult};
    \item Computing the determinant of a matrix $M = (m_{ij}) \in k[x]^{n \times n}$:  $\bigO(n^{\omega+\td} q^{\td} \lceil s(M) \rceil )$, where $\omega < 2.37286$ (see \cite[Proposition 3.3]{comp_detandhnf} and \cite{AlmanWilliams_matmult}) and 
    \[ s(M) = \frac{1}{n} \sum_{i=1}^{n} \left(\max_{1\le j\le n}\deg(M)_{i,j} \right)\]
    is the average column degree of $M$.
    \item Factoring a polynomial of degree $d$ in $k[x]$: $\bigO(d^\omega q^\td)$ using Berlekamp's algorithm \cite[Theorem 14.32]{vonzurGathen}. 
\end{itemize}




Sizes of elements are measured in \emph{heights} which are defined as follows:

\begin{itemize} \itemsep 4pt
    \item For $\lambda=\lambda_1/\lambda_2 \in k(x)$ with coprime polynomials $\lambda_1, \lambda_2 \in k[x]$, $\lambda_2\ne 0$, we define $\hh(\lambda) = \max\{\deg(\lambda_1) , \deg(\lambda_2)\}$.
    \item For a $k(x)$-basis $\mathcal{B}= \{\omega_1, \ldots , \omega_n \}$ of $F$ and $\alpha = \sum_{i=1}^n \lambda_i \omega_i \in F$ with $\lambda_i \in k(x)$ for $1\le i \le n$, we define $\hh_{\mathcal{B}}(\alpha) = \max_{1\le i\le n}\hh(\lambda_i).$ \\
    For a precomputed fixed reduced basis $\mathcal{B}$, 
    we write $\hh(\alpha)$ for $\hh_{\mathcal{B}}(\alpha)$.
    \item For a divisor $D = \sum_P n_P P$ of $F$, we define $\hh(D) =   \sum_P |n_P| \deg P $.
\end{itemize}
We assume that we have precomputed a reduced basis $\mathcal{B}=\{\omega_1, \ldots , \omega_n\}$ of $F/k(x)$ and polynomials $a_{ijm} \in k[x]$ such that $\omega_i \omega_j = \sum_{m=1}^n a_{ijm} \omega_m$ for $1 \le i, j \le n$. The elements of a reduced basis are short; specifically
\begin{equation}\label{eq:bound_infinite_norm_omega_n}
     \infnorm{\omega_n} \le \left\lceil \frac{2g-1}{n} \right\rceil + 1
\end{equation}
by \cite[Theorem 5.4.1]{ATthesis}. 

Fractional $O_F$-ideals $I$ are given in \emph{Hermite Normal Form} (HNF) representation, i.e.\ as a pair $(M_I, d(I))$. Here, $d(I)$ is the denominator of $I$, i.e.\ the monic polynomial $d \in k[x]$ of minimal degree such that $dI \subseteq O_F$, and $M_I$ is the coefficient matrix of a $k[x]$-basis of $d(I)I$ in HNF.

The next two lemmas provide the cost of norm computation.

\begin{lem}\label{lem:comp_norm_of_an_element}
For $\alpha \in F$, computing $\textrm{Norm}_{F/k(x)}(\alpha)$ requires $\bigO (n^3 d_\alpha^{1+\td}q^{\td})$ bit operations, where $d_\alpha=\max  \{\hh
(\alpha), 2\left( \left\lceil \frac{2g-1}{n} \right\rceil+1 \right)\}$.
\begin{proof}
We have $\textrm{Norm}_{F/k(x)}(\alpha) = \det(M_\alpha)$, where $M_\alpha \in k(x)^{n\times n}$ is the unique matrix such that
$\alpha \, [\omega_1 , \dots , \omega_n] = [\omega_1 , \dots , \omega_n] \, M_{\alpha}$. Writing $\alpha = \sum_{i=1}^n \lambda_i \omega_i$ and $M_\alpha = (m_{ij})$, we have $m_{jl} = \sum_{i=1}^n \lambda_i a_{ijl}$ and $\deg m_{jl} \le 2d_\alpha$ by \eqref{eq:bound_infinite_norm_omega_n}. So computing~$M_\alpha$ takes $\bigO\left(n^3 d_\alpha^{1+\td} q^{\td}\right)$ bit operations, and this dominates the cost of computing $\det M_\alpha$.
%
\end{proof}
\end{lem}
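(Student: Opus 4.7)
The plan is to realize $\textrm{Norm}_{F/k(x)}(\alpha)$ as the determinant of the multiplication-by-$\alpha$ matrix $M_\alpha \in k(x)^{n\times n}$ determined by $\alpha[\omega_1,\dots,\omega_n] = [\omega_1,\dots,\omega_n] M_\alpha$, and then bound the cost of (i) assembling $M_\alpha$ from the precomputed structure constants $a_{ijm}$ and (ii) computing its determinant. I expect (i) to dominate, and the main thing to justify is simply the degree bound on the entries of $M_\alpha$; everything else is a direct invocation of the arithmetic estimates listed above.

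First I would write $\alpha = \sum_{i=1}^n \lambda_i \omega_i$ with $\lambda_i \in k(x)$ and use the multiplication table $\omega_i \omega_j = \sum_m a_{ijm} \omega_m$ to obtain the closed-form expression $m_{lj} = \sum_{i=1}^n \lambda_i a_{ijl}$ for the entries of $M_\alpha$. To bound $\deg m_{lj}$, I use the fact that $\mathcal{B}$ is reduced together with \eqref{eq:bound_infinite_norm_omega_n}: since $\infnorm{\omega_i} \le \lceil (2g-1)/n\rceil + 1$ for every $i$, we have $\infnorm{\omega_i \omega_j} \le 2(\lceil (2g-1)/n\rceil + 1)$, and reducedness of $\mathcal{B}$ then forces $\deg a_{ijm} \le 2(\lceil (2g-1)/n\rceil + 1) \le d_\alpha$. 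Combined with $\deg \lambda_i \le \hh(\alpha) \le d_\alpha$, this gives $\deg m_{lj} \le 2 d_\alpha$.

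Next I would count bit operations. Each entry $m_{lj}$ is a sum of $n$ products of polynomials in $k[x]$ of degree $\bigO(d_\alpha)$, so computing one entry costs $\bigO(n \, d_\alpha^{1+\td} q^\td)$ by the assumed polynomial multiplication estimate. Summing over the $n^2$ entries yields $\bigO(n^3 d_\alpha^{1+\td} q^\td)$ for assembling $M_\alpha$. For the determinant, I would apply the cited bound $\bigO(n^{\omega+\td} q^\td \lceil s(M_\alpha) \rceil)$ with average column degree $s(M_\alpha) = \bigO(d_\alpha)$, giving $\bigO(n^{\omega+\td} d_\alpha q^\td)$. Since $\omega < 2.37286 < 3$, this is absorbed by the cost of the assembly step, and the total is $\bigO(n^3 d_\alpha^{1+\td} q^\td)$ as claimed.

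The only slightly subtle point is the degree bound on the structure constants $a_{ijm}$; once that is in hand from reducedness of $\mathcal{B}$ and the bound on $\infnorm{\omega_n}$, the rest is a mechanical accounting against the arithmetic cost table. No other obstacle arises.
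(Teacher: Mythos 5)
Your proposal is correct and follows the same route as the paper: realize the norm as $\det(M_\alpha)$ for the multiplication-by-$\alpha$ matrix, bound the entry degrees by $2d_\alpha$ via the reduced-basis bound \eqref{eq:bound_infinite_norm_omega_n}, and observe that assembling $M_\alpha$ in $\bigO(n^3 d_\alpha^{1+\td} q^{\td})$ bit operations dominates the determinant computation. Your write-up merely spells out the degree bound on the structure constants and the entry-by-entry operation count that the paper leaves implicit.
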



\begin{lem}\label{lem:norm_I_hnf}
For a a fractional $O_F$-ideal $I$ in HNF representation, computing $\textrm{Norm}(I)$ requires
    $\bigO\left( \left( n^{2+\td}\deg(\textrm{Norm}(d(I)I)) + (n\deg d(I))^{1+\td}\right)q^{\td}\right)$
bit operations.
\begin{proof}
Let $(M_I, d(I))$ be the HNF representation of $I$. Then $\textrm{Norm}(I) = \det M_I/d(I)^n$. By \cite[Proposition 5.1.17]{ATthesis}, we have $\infnorm{M_I} \le \deg(\text{Norm}(d(I)I))$. So the cost of computing $\det M_I$ is 
$\bigO(n^{2+\td}\deg(\text{Norm}(d(I)I))q^{\td})$, and that of computing $d(I)^n$ is $\bigO((n\deg(d(I)))^{1+\td}q^{\td})$.
\end{proof}
\end{lem}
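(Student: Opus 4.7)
The plan is to exploit the HNF representation directly: if $I$ has HNF representation $(M_I, d(I))$, then $M_I$ encodes a $k[x]$-basis of the integral ideal $d(I)I$, and a standard determinant identity for module bases gives
\[
\textrm{Norm}(I) \;=\; \frac{\det M_I}{d(I)^n}.
\]
So the computation splits into the two subtasks of evaluating $\det M_I$ and $d(I)^n$, followed by an exact polynomial division that will be absorbed into the dominant costs.

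For the numerator, I would first invoke \cite[Proposition 5.1.17]{ATthesis} to obtain the entry bound $\infnorm{M_I} \le \deg(\textrm{Norm}(d(I)I))$. The key observation is that $M_I$ is upper triangular by virtue of being in HNF, so $\det M_I$ is simply the product of the $n$ diagonal entries. Applying the determinant-cost estimate from the preamble (with the average column degree controlled by $\infnorm{M_I}$) yields cost $\bigO(n^{2+\td}\deg(\textrm{Norm}(d(I)I))\,q^\td)$ once logarithmic factors are absorbed into the $\td$ convention.

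For the denominator, repeated squaring computes $d(I)^n$ in $O(\log n)$ multiplications; the final polynomial has degree $n\deg d(I)$, so all intermediate products have degree bounded by $n\deg d(I)$, and the polynomial multiplication cost recorded in the preamble gives total cost $\bigO((n\deg d(I))^{1+\td} q^\td)$.

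Summing the two contributions produces the stated bound. The only mild care needed is to check that the final division of $\det M_I$ by $d(I)^n$ is exact (guaranteed because $I$ is a fractional $O_F$-ideal) and that its cost is dominated by the previous two estimates, which follows from $\deg \det M_I \le n\,\infnorm{M_I}$. The main obstacle would be matching the specific exponent $n^{2+\td}$ without appealing to fast matrix multiplication; the HNF structure removes this obstacle, since the triangular determinant computation avoids the general $n^{\omega+\td}$ bound entirely.
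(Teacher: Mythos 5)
Your proposal is correct and follows essentially the same route as the paper's proof: the identity $\textrm{Norm}(I)=\det M_I/d(I)^n$, the entry bound $\infnorm{M_I}\le\deg(\textrm{Norm}(d(I)I))$ from \cite[Proposition 5.1.17]{ATthesis}, and the split into the determinant and power computations with the division cost absorbed. The one thing you add is an explicit justification of the exponent $n^{2+\td}$ via the triangularity of the HNF matrix (the paper simply asserts this cost without appealing to the generic $n^{\omega+\td}$ determinant bound), which is a reasonable way to fill in a step the paper leaves implicit.
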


\subsection{Compact representation}
The cost of computing a compact representations using Algorithm \ref{alg:comprep} is dominated by the calls to Algorithm \ref{alg:reduce} whose cost in turn is dominated by computing $k$-bases of at most $g+1$ Riemann-Roch spaces. We assume that $\deg P_{\infty,r+1} = 1$.
%
%
Let $D$ be a divisor of $F$. 
For brevity, we denote the cost of computing a $k$-basis of the Riemann-Roch space $L(D)$ by $\rr{\hh(D)}$. By \cite[Theorem 4.13]{Jens_Latt}, we have
\begin{equation}\label{eq:comp_RRcomputation}
\rr{\hh(D)} = \bigO \left(\left(n^5(\hh(D)+n^2C_f)^2+ n^{5+\td}C_f^{2+\td}\right) q^{\td}\right)
\end{equation}
bit operations, with $C_f$ as in (\ref{eq:definition_Cf}), and 
$\rr{m \hh(D)} = \rr{\hh(D)}$ for 
$m\in \mathbb{R}$.

\begin{lem} \label{lem:comp_reduce_withprincipalideal}
    Let $I$ be a fractional ideal and $v \in \mathbb{Z}^r$. On input $I$ and $v$, Algorithm~\ref{alg:reduce} requires $\bigO (g\,\rr{n\hh(\text{Norm}(I))+n\infnorm{v}+g})$ bit operations.
\begin{proof}
For the divisor $D$ formed in step 1, we have $\hh(D) \le \hh(\dvs(I)) + n \infnorm{v}$, where
\[ \hh(\dvs(I)) \le \sum_{P \in P_0(F)} |v_P(\dvs(I))| \deg P 
    \le \sum_{P \in P_0(F)} |v_P(\text{Norm}(I))| \deg P \le 2n\hh(\text{Norm}(I)) . \]
The last inequality can be obtained from the factorization of $\textrm{Norm}(I)$ into irreducible polynomials in $k[x]$; see \cite[Lemma 2.35]{Leem_thesis}. The interval containing $\ell$ forces $|\ell| \le h(D)+g$, so 
\[ \hh(D+\ell P_{r+1}) \le 2h(D) + g \le 2n\hh(\text{Norm}(I)) + n \infnorm{v} + g. \]
The loop in step 2 is executed $g+1$ times, so the result follows from \eqref{eq:comp_RRcomputation}.
\end{proof}
\end{lem}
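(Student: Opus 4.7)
The plan is to observe that Algorithm~\ref{alg:reduce} consists of a loop of at most $g+1$ iterations, each dominated by computing (or testing non-triviality of) a $k$-basis of the Riemann-Roch space $L(D + \ell P_{\infty,r+1})$. By \eqref{eq:comp_RRcomputation}, this costs $\rr{\hh(D+\ell P_{\infty,r+1})}$ bit operations per iteration. It therefore suffices to establish a uniform upper bound on $\hh(D+\ell P_{\infty,r+1})$ in terms of $\hh(\textrm{Norm}(I))$, $\infnorm{v}$, $n$, and $g$, and then multiply by $\bigO(g)$.

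To bound this height I would proceed in two steps. First, for the divisor $D = \dvs(I) + \sum_{i=1}^{r} v_i P_{\infty,i}$ formed in step 1, the triangle inequality gives
\[ \hh(D) \le \hh(\dvs(I)) + \sum_{i=1}^r |v_i|\deg P_{\infty,i} \le \hh(\dvs(I)) + n\infnorm{v}, \]
where the final inequality uses $\sum_{P \in P_\infty(F)} e_P \deg P = n$ to bound the sum of infinite-place degrees by $n$. Second, I would relate $\hh(\dvs(I))$ to $\hh(\textrm{Norm}(I))$ by noting that for each finite place $P$ of $F$ lying over $p$ of $k(x)$, the factorization of $\textrm{Norm}(I) \in k(x)^\times$ into irreducibles of $k[x]$ determines the exponents $v_P(I)$ up to the combinatorics of primes above $p$; a careful accounting using the residue degrees $f(P/p)$ and $\sum_{P|p} e_P f(P/p) = n$ yields the cleaner bound $\hh(\dvs(I)) \le 2n\,\hh(\textrm{Norm}(I))$.

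Finally, since $\deg P_{\infty,r+1} = 1$, I have $\hh(D+\ell P_{\infty,r+1}) \le \hh(D) + |\ell|$, and the loop restricts $\ell$ to $[-\deg D, -\deg D + g]$, so $|\ell| \le |\deg D|+g \le \hh(D)+g$ by the definition of height. Combining these estimates yields $\hh(D+\ell P_{\infty,r+1}) = \bigO(n\hh(\textrm{Norm}(I)) + n\infnorm{v} + g)$, and the scale-invariance $\rr{m\hh(D)} = \rr{\hh(D)}$ noted after \eqref{eq:comp_RRcomputation} absorbs the constant factor into $\rr{\cdot}$, giving a per-iteration cost of $\rr{n\hh(\textrm{Norm}(I)) + n\infnorm{v} + g}$. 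Multiplying by the $\bigO(g)$ iterations gives the claim. The main obstacle I anticipate is pinning down the constant in $\hh(\dvs(I)) \le 2n\,\hh(\textrm{Norm}(I))$, since for a general fractional ideal the valuations at distinct primes above a common place of $k(x)$ can cancel in the norm but not in the divisor; handling this cleanly requires a careful case analysis of the prime factorization of $\textrm{Norm}(I)$, which I would defer to the technical lemma cited as \cite[Lemma 2.35]{Leem_thesis}.
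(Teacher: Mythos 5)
Your proposal is correct and follows essentially the same route as the paper's proof: bound $\hh(D)$ by $\hh(\dvs(I)) + n\infnorm{v}$, bound $\hh(\dvs(I))$ by $2n\,\hh(\textrm{Norm}(I))$ via the factorization of the norm (deferring to the same technical lemma the paper cites), bound $|\ell|$ by $\hh(D)+g$, and multiply the per-iteration Riemann--Roch cost by the $g+1$ loop iterations. The only difference is that you spell out a few intermediate inequalities (such as $\sum_i \deg P_{\infty,i} \le n$) that the paper leaves implicit.
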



\begin{lem}\label{lem:comp_of_comprep} Let $A$ be a principal $O_F$-ideal in HNF-representation, generated by an element $\alpha\in F$. On input $A$ and and $\textnormal{val}_\infty(\alpha)$,
Algorithm \ref{alg:comprep} requires
\begin{align*}
\bigO\left(g  \left(\rr{n\hh(\textrm{Norm}_{F/k(x)}(\alpha)) + g}+  \log\left(\norm{\textnormal{val}_\infty(\alpha)}_\infty+g\right) \rr{n^2+ng}\right)\right)
\end{align*}
bit operations.
\begin{proof}
We use the fact that $\beta = \mu/\alpha$ is a minimum of $O_F$, so $B = (\beta^{-1})O_F$ is a reduced $O_F$-ideal, and hence  $0 \le \deg (\dvs(B)) \le g$. By Lemma \ref{lem:comp_reduce_withprincipalideal}, the cost of step 1 is 
$\bigO\left(g\,\rr{n\hh(\textrm{Norm}_{F/k(x)}(\alpha)) + g}\right)$ bit operations. Similar reasoning shows that the cost of computing each $\beta_i$ in step is $\bigO \left(g\,\rr{n^2+ng} \right)$ bit operations. 
The number $l$ of loop iterations defined in step 2 can be bounded by
\begin{equation*} 
    l= \lfloor \log_2 (\norm{\text{val}_\infty(\beta)}\rceil + 1 \le \lfloor \log_2 (\norm{\text{val}_\infty (\alpha)}_\infty + g)\rceil +1. \qedhere 
\end{equation*}
\end{proof}
\end{lem}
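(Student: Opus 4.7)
The plan is to decompose the cost of Algorithm \ref{alg:comprep} into two pieces matching the two terms in the claimed bound: the initial call to Reduce in Step 1, and the $l$ iterations of the main loop (Steps 4--9), each of whose body is dominated by a second call to Reduce. I would then apply Lemma \ref{lem:comp_reduce_withprincipalideal} to each call with carefully bounded inputs.

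For Step 1, the input is $A=\alpha O_F$ with $\textrm{Norm}(A) = \textrm{Norm}_{F/k(x)}(\alpha)$ up to a unit, and target vector $\vec{0}$. Plugging directly into Lemma \ref{lem:comp_reduce_withprincipalideal} gives a contribution of $\bigO(g\,\rr{n\hh(\textrm{Norm}_{F/k(x)}(\alpha)) + g})$, which matches the first summand. I would also use this first call to bound $l$: since $\mu$ is a minimum of $A$ close to $\vec{0}$, we have $\infnorm{\text{val}_\infty(\mu)} \le r+g$, so Step 2 forces $l = \bigO(\log(\infnorm{\text{val}_\infty(\alpha)} + g))$.

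For each loop iteration, the cost of Step 7 (the call to Reduce on $B$ with target $t-2v_\beta$) dominates, so the question is to bound $\hh(\textrm{Norm}(B))$ and $\infnorm{t-2v_\beta}$. The key invariant I would establish is that after Step 6 the ideal $B$ equals $(\beta_{(i)})^{-2} O_F$, and because the previous iteration's $\beta_{i-1}$ is a minimum of its own input ideal, the ideal $\beta_{i-1}^{-1} \cdot (\text{previous } B)$ is reduced; squaring at most doubles $\deg\dvs$, so $B$ satisfies $0\le \deg\dvs(B) \le 2g$ and hence $\hh(\textrm{Norm}(B)) = \bigO(g)$. Analogously, the rounding in Step 5 and the running update of $v_\beta$ in Step 8 keep $\infnorm{t-2v_\beta} = \bigO(r+g) = \bigO(n+g)$. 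Feeding these bounds into Lemma \ref{lem:comp_reduce_withprincipalideal} yields per-iteration cost $\bigO(g\,\rr{n\cdot g + n(n+g)+g}) = \bigO(g\,\rr{n^2+ng})$. Multiplying by $l$ and adding the Step 1 contribution gives the asserted bound.

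The main obstacle I expect is the invariant argument for the size of $B$ and for the input vector throughout the loop: one has to verify that the binary-exponentiation-style squaring in Step 6 does not blow up $\deg\dvs(B)$, and that the rounding in Step 5 together with the accumulator $v_\beta$ keeps the Reduce target bounded by $\bigO(n+g)$ independently of $i$. Once this invariant is in place, the complexity bound follows purely by substitution into Lemma \ref{lem:comp_reduce_withprincipalideal}.
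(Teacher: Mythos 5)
Your proposal is correct and follows essentially the same route as the paper: bound the Step-1 call to Reduce via Lemma \ref{lem:comp_reduce_withprincipalideal} with $\textrm{Norm}(A)=\textrm{Norm}_{F/k(x)}(\alpha)$, bound $l$ using the closeness of $\mu$ to $\vec{0}$, and bound each loop iteration by showing the working ideal is (the square of) a reduced ideal and the Reduce target is $\bigO(n+g)$. The only difference is that you spell out the invariant on $B$ and on $t-2v_\beta$ explicitly, where the paper compresses this into ``similar reasoning'' (deferring to Eisentr\"ager--Hallgren); your substitution into Lemma \ref{lem:comp_reduce_withprincipalideal} then gives exactly the stated bound.
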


Let $D$ be any divisor of $F$ and $\alpha \in L(D)$. Then  \cite[Lem.\ 3.5]{JHS_divisor_class} 
implies that 
%
\begin{equation}\label{lem:height_of_RRspace_elt}
    \hh(\alpha)= \bigO(\hh(D)+n).
\end{equation}
We can now bound the heights of the quantities comprising a compact representation. 

\begin{lem}\label{lem:height_CR}
Let $\mathbf{t}_\alpha = (\mu, \beta_{1},\dots,\beta_{l})=$ CompRep$(\alpha O_F$, $v_\infty(\alpha))$ be a compact representation of $\alpha \in F$, and let $C_f$ be as defined in (\ref{eq:definition_Cf}). Then the following hold.
\begin{align*}
l&=\bigO(\log\infnorm{v_\infty(\alpha)}+g),\\
\hh(\mu) & = 
\bigO(n\hh(\textrm{Norm}_{F/k(x)}(\alpha))+g+ n), \\
\hh(\beta_i) & = \bigO(n^2+ ng) ~\text{ for } 1\le i \le l.
\end{align*}

\begin{proof}
The bound on $l$ was established in he proof of Lemma \ref{lem:comp_of_comprep}. We have $\mu \in L(D)$ where $D = \dvs(\alpha O_F) + \ell P_{r+1}$. The bound on $\hh(\mu)$ follows from the bound on $\hh(D)$ given in the proof of Lemma \ref{lem:comp_of_comprep} and \eqref{lem:height_of_RRspace_elt}. 
%

By \cite[Proof of Prop.\ 4.11]{EisentragerHallgren}, each $\beta_i$ is a minimum in a fractional $O_F$-ideal~$B_i^2$ close to a vector $t_i = (t_{i1}, \ldots , t_{ir})$, where $B_i$ is a reduced ideal and 
$\norm{t_i}_\infty = O(n+g)$.
So $\beta_i \in L(D_i)$ where \[\hh(D_i)= \hh(\dvs(B_i^2))+\hh\left(\sum_{j=1}^r t_{ij} P_j + \ell P_{r+1}\right)= \bigO(g+n\norm{t_i}_\infty) =\bigO(n^2+ng).\]  
Thus, $\hh(\beta_i) = \bigO(n^2+ng)$ by \eqref{lem:height_of_RRspace_elt}.
\end{proof}
\end{lem}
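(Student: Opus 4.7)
The plan is to bound each of the three quantities by tracing through Algorithm \ref{alg:comprep} and applying the Riemann--Roch height bound \eqref{lem:height_of_RRspace_elt} to each element returned by Algorithm \ref{alg:reduce}. Every element produced by Reduce lies in a space $L(D+\ell P_{\infty,r+1})$ for some $\ell$ with $|\ell| \le \hh(D)+g$, so $\hh(D+\ell P_{\infty,r+1}) \le 2\hh(D)+g$, and therefore the height of the returned element is $\bigO(\hh(D)+g+n)$. The job is thus to bound the height of the divisor $D$ formed in each call.

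For the bound on $l$, I would re-use the argument already needed in Lemma \ref{lem:comp_of_comprep}: Algorithm \ref{alg:reduce} on input $(\alpha O_F, \vec{0})$ returns a minimum $\mu$ that is close to $\vec{0}$, meaning $\sum_{i=1}^r |v_{P_{\infty,i}}(\mu)| \le r+g$, so $\infnorm{\text{val}_\infty(\mu)} \le r+g$. Substituting into the formula $l = \lfloor \log_2 \infnorm{\text{val}_\infty(\mu)-\text{val}_\infty(\alpha)}\rceil + 1$ from step 2 of CompRep and applying the triangle inequality yields $l = \bigO(\log(\infnorm{\text{val}_\infty(\alpha)}+g))$.

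For $\hh(\mu)$, the divisor passed to Reduce is $D=\dvs(\alpha O_F)$, and the estimate already used in the proof of Lemma \ref{lem:comp_reduce_withprincipalideal} gives $\hh(\dvs(\alpha O_F)) \le 2n\hh(\textrm{Norm}_{F/k(x)}(\alpha))$; applying \eqref{lem:height_of_RRspace_elt} then produces the claimed $\bigO(n\hh(\textrm{Norm}_{F/k(x)}(\alpha))+g+n)$. For each $\beta_i$, I would unpack step 7 of CompRep: the ideal $B=\beta_{(i)}^{-2}O_F$ is reduced (since each $\beta_j$ is built as a minimum, so the accumulated denominator has no residual content), hence $\hh(\dvs(B)) \le g$; the target vector $t-2v_\beta$ is designed to approximate $\text{val}_\infty(\alpha)/2^{l-i}-2\text{val}_\infty(\beta_{(i)})$, which by the inductive control on $v_\beta$ has $\infnorm{}$-norm $\bigO(n+g)$. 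The divisor input to Reduce then has height $\bigO(g+n(n+g))=\bigO(n^2+ng)$, and \eqref{lem:height_of_RRspace_elt} finishes the job. The main obstacle will be carefully controlling $\infnorm{t-2v_\beta}$ through the iterations, which I would either do inductively here or, more expediently, import as \cite[Proof of Prop.\ 4.11]{EisentragerHallgren}.
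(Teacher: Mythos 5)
Your proposal follows essentially the same route as the paper: bound the height of the divisor fed to each Reduce call, apply the Riemann--Roch height bound \eqref{lem:height_of_RRspace_elt}, and import the control of the target vectors from \cite[Proof of Prop.\ 4.11]{EisentragerHallgren}. The only slip is calling $B=\beta_{(i)}^{-2}O_F$ itself reduced --- it is the square of a reduced ideal, so $\hh(\dvs(B))\le 2g$ rather than $g$ --- but this does not affect the $\bigO(n^2+ng)$ conclusion.
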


\subsection{Ga\'{a}l-Pohst}\label{sec:PG_companalysis}\label{subsec:comp_PG}

The cost of the Ga\'{a}l-Pohst method \cite{GaalPohstnorm} is dominated by computing the norms of elements in the search space and checking whether solutions are associate. By \eqref{eq:degboundoflambdai}, the number of elements in the search space is bounded by
%
\begin{equation}\label{lem:num_of_elts_to_test}
\prod_{i=1}^{n} q^{\lfloor \Theta- \norm{\omega_i}_\infty \rfloor+1} 
\le q^{n \Theta}, 
\end{equation}
with $\Theta$ given by \eqref{eq:bound_for_max_norm_alpha}. 
%
%
%

%
To test associateness of two elements, we factor the principal ideals they generate, which is accomplished by factoring their norms using Berlekamp's algorithm. 
%
This yields the following complexity for Ga\'{a}l-Pohst's exhaustive search method.

\begin{thm}\label{thm:PG_complexity}
Let $T=r2^{r(r-1)/4 -1}R_F + \frac{1}{n}\deg c$, where $R_F$ is the regulator and $r$ the unit ran of $F/k(x)$, Let $d_T = \max \{T, \lceil\frac{2g-1}{n}\rceil\}$. Then the Ga\'{a}l-Pohst method can solve Equation (\ref{eq:ne_defn}) in
\[
\bigO\left(2^{nTq^{\td}}q^{\td} (n^3 d_T^{1+\epsilon}+(\deg c)^\omega ) \right)
\]
bit operations, when $n$, $g$, $q$ and $\deg c \rightarrow \infty$.
\begin{proof}
By \eqref{eq:degboundoflambdai}, we have $\hh
(\alpha) \le \Theta$ for every $\alpha$ in the search space.
By Corollary~\ref{cor:bound_LLLbasis_of_Unit_lattice}, we have $\Theta \le T$. 
Thus, computing the norm of each $\alpha$ can be done in time
$\bigO(n^3 d_T^{1+\epsilon}q^{\td})$ by Lemma \ref{lem:comp_norm_of_an_element}. By \eqref{lem:num_of_elts_to_test}, we compute the norms of up to $q^{n\Theta}$
elements. The cost of testing whether two solutions of \eqref{eq:ne_defn} are associate is $\bigO((\deg c)^\omega q^{\td})$ via norm factorization, and the number of tests that need to be performed is bounded above by $q^{n \Theta}$.
%
\end{proof}
\end{thm}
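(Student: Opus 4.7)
The plan is to combine the search-space count \eqref{lem:num_of_elts_to_test} with the per-element norm cost from Lemma \ref{lem:comp_norm_of_an_element}, after first replacing the implicit geometric quantity $\Theta$ from \eqref{eq:bound_for_max_norm_alpha} by the explicit bound $T$.

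\emph{Step 1: show $\Theta \le T$.} Take $\epsilon_1, \ldots, \epsilon_r$ to be a system of fundamental units whose value vectors $\phi_{P_\infty(F)}(\epsilon_j) = (-v_{P_{\infty,i}}(\epsilon_j))_i$ form the LLL-reduced basis of $\Lambda_{P_\infty(F)}'$ produced by Algorithm \ref{alg:Svalmat}. Since $|S|-1 = r$ for $S = P_\infty(F)$, Proposition \ref{prop:MatrixMS_entry_bound} combined with $R_S' \le R_F$ from \eqref{ineq:RS'boundq^2g} yields $|v_P(\epsilon_j)| \le 2^{r(r-1)/4} R_F$ for every $P \in P_\infty(F)$ and every $1 \le j \le r$. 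Hence
\[ \theta_P = \frac{1}{2} \sum_{j=1}^r |v_P(\epsilon_j)| \le r \, 2^{r(r-1)/4 - 1} R_F, \]
and since $e_P \ge 1$, substituting into \eqref{eq:bound_for_max_norm_alpha} gives $\Theta \le T$.

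\emph{Step 2: count and price the search.} Plugging $\Theta \le T$ into \eqref{lem:num_of_elts_to_test} bounds the search space by $q^{nT} = 2^{nT \log_2 q} = 2^{nT q^{\td}}$. Every candidate $\alpha$ written in $\mathcal{B}$ satisfies $\hh(\alpha) \le \Theta \le T$, so the quantity $d_\alpha$ of Lemma \ref{lem:comp_norm_of_an_element} is $O(d_T)$, and each norm computation costs $O(n^3 d_T^{1+\td} q^{\td})$ bit operations. Among the at most $q^{nT}$ candidates surviving the norm test, the associate-detection procedure at the end of Section \ref{sec:FFandNE} requires knowing the prime ideal factorizations of the principal ideals they generate, which in turn reduces to factoring polynomials of degree $\deg c$ in $k[x]$; Berlekamp's algorithm does this in $O((\deg c)^\omega q^{\td})$ bit operations per surviving candidate, and the number of comparisons is again subsumed by the search-space count.

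Multiplying the search-space bound $2^{nT q^{\td}}$ by the maximum per-candidate cost $O((n^3 d_T^{1+\td} + (\deg c)^\omega) q^{\td})$ produces the claimed complexity. The only genuinely non-mechanical step is Step 1: the LLL-control of the $v_P(\epsilon_j)$ is what lets us exchange the a priori uncomputable $\Theta$ for the explicit expression $T$. Once that inequality is in place, the rest is direct substitution of previously established size bounds, and the asymptotic regime $n, g, q, \deg c \to \infty$ is handled uniformly by absorbing all logarithmic factors into $q^{\td}$.
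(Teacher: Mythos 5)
Your proposal is correct and follows essentially the same route as the paper's proof: bound $\hh(\alpha)\le\Theta\le T$ via the LLL-reduced unit (valuation) lattice basis, multiply the search-space count $q^{n\Theta}\le 2^{nTq^{\td}}$ by the per-element norm cost from Lemma \ref{lem:comp_norm_of_an_element}, and charge associateness testing to Berlekamp factorization of the norms. The only difference is cosmetic — you derive $\Theta\le T$ explicitly from Proposition \ref{prop:MatrixMS_entry_bound} where the paper simply cites Corollary \ref{cor:bound_LLLbasis_of_Unit_lattice}.
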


We briefly discuss the asymptotic complexity and the sizes of the solutions $\alpha$ produced by the Ga\'{a}l-Pohst method in the different asymptotic settings where one of $n$,  $g$, $\deg (c)$, $q$ tends to infinity and the other three quanities are fixed. 
By \eqref{eq:bound_for_max_norm_alpha}, we have $\hh
(\alpha) \le \Theta \le T.$
%
We note that $r \le n$ and use \eqref{ineq:RS'boundq^2g} to bound $R_F$.

\begin{itemize}
    \item $n \rightarrow \infty$: run time $2^{\bigO(n^2 2^{n^2/4})}$, $\hh
    (\alpha) = \bigO(n2^{n^2/4})$;
    \item $g \rightarrow \infty$: run time $2^{\bigO(q^g)}$, $\hh
    (\alpha) = 2^{\bigO(g)}$;
    \item $\deg c \rightarrow \infty$: run time $\bigO\left(2^{q^{\td}\deg c } (\deg c)^{\omega}\right)$, $\hh
    (\alpha) = O(\deg c)$;
    \item $q \rightarrow \infty$: run time $q^{\bigO(q^g)}$, $\hh
    (\alpha) = O(q^g)$.
\end{itemize}

\subsection{Improved exhaustive search}\label{subsec:comp_PGCR}

The cost of Algorithm \ref{alg:PGwithCR} is dominated by computing compact representations and their norms.
We assume that $F$ has at least one infinite place of degree $1$. 


The number of compact representations computed in Algorithm \ref{alg:PGwithCR} is bounded by  the number of tuples $(v_1, \ldots , v_{|S_{c,0}|}, v_{\infty,1}, \ldots , w_{\infty, r+1})$ that satisfy \eqref{eq:finite-values} and \eqref{eq:infinite-values}. The number of  $v_1, \ldots , v_{|S_{c,0}|}$ satisfying \eqref{eq:finite-values} is
%
\begin{equation}\label{eq:number_of_I}
 \prod_{i=1}^{\vert S_{c,0}\vert}( v_{P_i}(c) +1).
\end{equation}
Similarly counting the number of $( v_{\infty, 1} , \dots , v_{\infty,r+1})$ that satisfy \eqref{eq:infinite-values}, we obtain an upper bound of
%
\begin{equation}\label{eq:comp_pgcr_numberofCR}
\prod_{i=1}^{\vert S_{c,0}\vert} (v_P(c)+1)\prod_{j=1}^{r}(2\theta_{\infty,j} +1)
\end{equation}
on the number of compact representations computed in Algorithm \ref{alg:PGwithCR}. 
This yields the following cost estimate for Algorithm \ref{alg:PGwithCR}.

\begin{thm}\label{lem:comp_PGCR}
    With 
    $T'=r2^{r(r-1)/4 -1}R_F$, Algorithm \ref{alg:PGwithCR} can solve Equation (\ref{eq:ne_defn}) in 
    \begin{equation*}
    \begin{split}
    \bigO\Biggl(\bigl(g \rr{n\deg (c) + g} + g \log (T'+g)\rr{n^2+ng}\\+n^{5+\td}(T' \deg c)^{1+\td}+(\deg c)^\omega \bigr)q^{\td}(2T' +1)^r  \left(\prod_{P\in  S_{c,0}} (v_P(c)+1)\right)\Biggr)    \end{split}
    \end{equation*}
    bit operations, when $n$, $g$, $q$ and $\deg c \rightarrow \infty$.
    \begin{proof}
    The number of compact representations computed in Algorithm \ref{alg:PGwithCR} is given in \eqref{eq:comp_pgcr_numberofCR}.
    By Lemma \ref{lem:comp_of_comprep}, computing each compact representation takes
    \[\bigO\left(g q^{\td}\left( \rr{n\deg (c) + g} + \log (\max_i \theta_{\infty,i} +g)\rr{n^2+ng}\right)\right)\]
    bit operations.
    By Corollary \ref{cor:bound_LLLbasis_of_Unit_lattice}, we have $\theta_{\infty,i} \le \Theta \le T',$
    for $1\le i \le r+1$. The cost of testing whether or not such compact representation is in $O_F$ is 
    $\bigO((\deg c)^\omega q^{\td})$. For each compact representation computed in step 8, we have $l=  \bigO\left(\log\norm{M_{S_c,\infty}^T}_\infty \right)$. From Lemma \ref{lem:height_CR}, we also have $\hh(\beta_i) = \bigO(n^2+ng)$ for $1\le i \le l $ and $\hh(\mu) = \bigO(n \hh(Norm(I)) + g+n = \bigO(n \deg c + g+n)$ because $I$ divides $cO_F$. 
    By \cite[Lem. 2.48]{Leem_thesis}, the cost of computing the norm of such a compact representation 
    is thus $\bigO(n^{5+\td}(T' \deg c)^{1+\td}q^\td)$.  Finally, the cost of testing associateness of any two such compact representations is again the same as that of factoring $cO_F$, i.e.\
    $\bigO((\deg c)^\omega q^{\td})$.
    \end{proof}
\end{thm}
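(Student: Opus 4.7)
The plan is to bound the total cost as the product of the number of iterations of Algorithm~\ref{alg:PGwithCR} (i.e., the number of tuples that reach the compact representation computation in step~8) and the cost per iteration, then verify that each per-iteration subtask fits inside the stated parenthesized sum.

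For the iteration count, I would start from \eqref{eq:comp_pgcr_numberofCR}, which already gives the bound $\prod_{P\in S_{c,0}}(v_P(c)+1)\prod_{j=1}^r (2\theta_{\infty,j}+1)$ on the number of tuples satisfying \eqref{eq:finite-values} and \eqref{eq:infinite-values}. The next step is to uniformly bound each $\theta_{\infty,j}$: by definition $\theta_P = \tfrac{1}{2}\sum_j |v_P(\epsilon_j)|$, so $\theta_P$ is at most $r$ times the maximum entry of the unit value matrix $M_{P_\infty(F)}$. Applying Corollary~\ref{cor:bound_LLLbasis_of_Unit_lattice} to $S = P_\infty(F)$ gives $\theta_{\infty,j}\le r\, 2^{r(r-1)/4}R_F/2 = T'$. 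Substituting yields the factor $(2T'+1)^r\prod_{P\in S_{c,0}}(v_P(c)+1)$ in the final bound.

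For the per-iteration cost, I would list the four subtasks executed inside the inner loop of Algorithm~\ref{alg:PGwithCR} for each qualifying tuple. First, computing the compact representation $\mathbf{t}=$ CompRep$(I,V_\infty)$: since $I\mid cO_F$ implies $\hh(\textrm{Norm}(I))\le \deg c$ and $\infnorm{V_\infty}\le T'$, Lemma~\ref{lem:comp_of_comprep} gives cost $\bigO\bigl(g\,\rr{n\deg c + g}+g\log(T'+g)\,\rr{n^2+ng}\bigr)$, accounting for the first two summands. Second, checking whether $\mathbf{t}\in O_F$ and checking associateness to previously found solutions both reduce to factoring the norm $cO_F$ by Berlekamp, costing $\bigO((\deg c)^\omega q^\td)$, matching the $(\deg c)^\omega$ summand. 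Third, computing the norm of the element represented by $\mathbf{t}$ uses the height bounds from Lemma~\ref{lem:height_CR} ($\hh(\mu)=\bigO(n\deg c+g+n)$, $\hh(\beta_i)=\bigO(n^2+ng)$, $l=\bigO(\log T' + \log g)$) together with the compact-representation norm algorithm of \cite[Lem.~2.48]{Leem_thesis}, giving $\bigO(n^{5+\td}(T'\deg c)^{1+\td}q^\td)$, which is the $n^{5+\td}(T'\deg c)^{1+\td}$ summand.

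The final step is to multiply these per-iteration costs by the iteration count bound, absorbing all the $q^\td$ factors into the single $q^\td$ in the statement and noting that summing upper bounds dominates the actual sum. The main obstacle to executing the plan cleanly is verifying that the height and valuation bounds propagate correctly: specifically, confirming that inputs to CompRep genuinely satisfy $\hh(\textrm{Norm}(I))\le \deg c$ and $\infnorm{V_\infty}\le T'$ for every tuple tried (which follows because we only consider $I\mid cO_F$ together with $V_\infty$ satisfying \eqref{eq:infinite-values}), and that the constant in Lemma~\ref{lem:height_CR} indeed yields $\hh(\mu)=\bigO(n\deg c+g+n)$ so that the norm computation falls under the regime where \cite[Lem.~2.48]{Leem_thesis} gives the claimed $n^{5+\td}(T'\deg c)^{1+\td}$ bound rather than something larger. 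Once these bookkeeping items are confirmed, the overall bound follows by the product rule.
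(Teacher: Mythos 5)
Your proposal follows essentially the same route as the paper's proof: the iteration count from \eqref{eq:comp_pgcr_numberofCR} with $\theta_{\infty,j}\le T'$ via Corollary \ref{cor:bound_LLLbasis_of_Unit_lattice}, the per-iteration decomposition into CompRep (Lemma \ref{lem:comp_of_comprep}), membership and associateness tests via norm factorization, and the norm computation via Lemma \ref{lem:height_CR} and \cite[Lem.~2.48]{Leem_thesis}. The bookkeeping checks you flag ($\hh(\textrm{Norm}(I))\le\deg c$ for $I\mid cO_F$ and $\infnorm{V_\infty}\le T'$ from \eqref{eq:infinite-values}) are exactly the inputs the paper uses, so the argument is complete as outlined.
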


Again we analyze the complexity of Algorithm \ref{alg:PGwithCR} under different asymptotic assumptions, with one of $n$,  $g$, $\deg c$, $q$ tending to infinity and the others remaining fixed. 
To simplify the expression in Theorem \ref{lem:comp_PGCR}, we bound \eqref{eq:number_of_I}.
This quantity varies greatly depending on the factorization of $c$. It takes on its minimal possible value when $cO_F$ has only one unramified (i.e.\ inert) place $P$, in which case $v_P(c)~=~1$. Its maximum occurs when $c$ splits into linear factors 
and each linear factor splits completely, in which case 
$v_P(c)=1$ for all $P\in S_{c,0}$ and $\vert S_{c,0}\vert = n\deg c$. Thus, 
\begin{equation}\label{eq:boundon_prod_vpc+1}
    2\le \prod_{P\in S_{c,0}}(v_P(c)+1) \le 2^{n\deg c}.
\end{equation}
Along with these bounds, we again use $r \le n$  and bound $R_F$ via \eqref{ineq:RS'boundq^2g}.

\subsubsection{Case $n \rightarrow \infty$}
Here, $T'=
\bigO(n2^{n^2/4})$. Using the upper bound in \eqref{eq:boundon_prod_vpc+1}, we obtain 
 $\vert S_c \vert =  \bigO(n)$. 
If $cO_F$ is inert, in which case the lower bound in \eqref{eq:boundon_prod_vpc+1} applies, the asymptotic run time improves by a factor of $2^n$.
In this case, $\hh(\dvs(cO_F))=\bigO(n)$. By Proposition \ref{prop:MatrixMS_entry_bound} and \eqref{eq:bound_for_max_norm_alpha}, the size of a solution in compact representation in Algorithm \ref{alg:PGwithCR} is polynomial in $n$. 

\subsubsection{Case $g \rightarrow \infty$}

Here $T' = \bigO(2^{\bigO(g)})$.
and  
$\rr{n^2+ng}=\bigO(g^{2+\td})$ by (\ref{eq:comp_RRcomputation}) as $C_f =\bigO(g)$, yielding an asymptotic run time of $\bigO\left(g^{4+\td}2^{g(n+1)q^{\td}}\right)=2^{\bigO(g)}$ for Algorithm \ref{alg:PGwithCR}. 
Compact representations of solutions have polynomial height in $g$.

\subsubsection{Case $\deg c \rightarrow \infty$}

The worst case is again given by the upper bound in~\eqref{eq:boundon_prod_vpc+1},
in which vase $\vert S_c \vert =  \bigO(\deg c)$. From (\ref{eq:comp_RRcomputation}), we see that $\rr{n\deg c+ g}=\bigO((\deg c)^2)$, giving an asymptotic complexity of $\bigO\left(2^{n\deg c }(\deg c)^{\omega } \right)$ for Algorithm \ref{alg:PGwithCR}.
When $cO_F$ is inert, this improves to 
$\bigO\left((\deg c)^{\omega}\right)$, which is polynomial in $\deg c$. Here, $\hh(\dvs(cO_F))=\bigO(\deg c)$, so a solution in compact representation has polynomial size in $\deg c$.

\subsubsection{Case $\deg q \rightarrow \infty$}

Here, (\ref{ineq:RS'boundq^2g}) yields an asymptotic complexity of $\bigO(q^{gn+\td})$ for Algorithm \ref{alg:PGwithCR}.


\medskip 

Note that in all cases, Algorithm \ref{alg:PGwithCR} represents an enormous speed-up over Ga\'{a}l-Pohst, and the solutions have far smaller sizes. So introducing compact representation results in a significant gain in time and space efficiency. 

\subsection{Index calculus}\label{sec:comp_PrincCR}
We now
 analyze the expected running time of Algorithm \ref{alg:princwithCR}
Again we assume that $F$ has at least one infinite place of degree 1. 
%
Define
\begin{equation}\label{eq:define_d_Sc}
d_{S_c}:=\max \left\{2^{(\vert S_c \vert -1)(\vert S_c \vert -2)/4}R_S', ~\max_{P\in S_{c,0}} v_P(c)\right\}.
\end{equation}
For any augmented matrix [$M_{S_c}^T \ \ \aug \ \  V]$ appearing in \eqref{eq:matrixeq_NE3princ}, we have $\norm{[M_{S_c}^T \ \ \aug \ \  V] }_{\infty} \le d_{S_c}$, and $\max_{1\le i \le \vert S_c\vert -1} \{\hh(\epsilon_i)\}\le d_{S_c}$. Note that $d_{S_c}$ only depends on $c$ and $F$.


We enumerate all $O_F$-ideals $I$ that divide $cO_F$ as given in \eqref{eq:PGCRformingI}. The number of these $I$ is given by \eqref{eq:number_of_I}. 
For each such~$I$, we must compute five components: the HNF representation of $I$, the norm $\textrm{Norm}_{F/k(x)}(I)$, a solution $\mathcal{X}$ of (\ref{eq:matrixeq_NE3princ}), a vector $v_0$ in the unit lattice that is close to~$\mathcal{X}$, 
and a compact representation of the solution of \eqref{eq:ne_defn} corresponding to $\mathcal{X}$.


Computing the HNF representation of such $I$ 
entails computing products of HNF representations which, by \cite[Theorem~5.2.2]{ATthesis} for example, can be done in 
$\bigO\left( n^7 g^2 q^{\td}\right)$
bit operations. 

Since $c$ is in $k[x]$, each $I$ is integral, 
so Lemma \ref{lem:norm_I_hnf} shows that computing the norm of each $I$ is  
$\bigO\left( \left( n^{2+\td}\deg c + (n \deg c)^{1+\td} \right) q^{\td}\right)$ 
bit operations.

To determine the cost of solving a matrix equations $M_{S_c,0}^T X =V$ as given in (\ref{eq:matrixeq_NE3princ}), we note that
%
\begin{equation*}
    \norm{M_{S_c,0}^T}_\infty = \max_{\substack{P\in {P}_\infty(F)\\ 1\le i \le \vert S_c \vert-1}} v_P(\epsilon_i) , \qquad \norm{V}_\infty \le \max_{P \in {P}_0(F)}v_P(c).
\end{equation*}
by construction. By \cite[Theorem 22]{matrixequation_solv}, the cost of solving $M_{S_c,0}^T X= V$ is $\bigO(\vert S_c \vert ^{\omega +\epsilon}d_{S_c}^{\epsilon})$ bit operations.


To find a vector $v_0$ in the unit lattice closest to $M_{S_c,\infty}^T \mathcal{X}$, we can use the algorithm in \cite{BeckerGamaJoux_SVPCVP}. Its expected run time is $\bigO(2^{0.3774 r})$, and we have 
\[ \norm{M_{S_c,\infty}^T \mathcal{X} - v_0}_\infty \le \max_{1\le i \le r}\left\{{\sum_{j=1}^{r} \frac{1}{2} \vert v_{P_i}(\epsilon_j) \vert}\right\} \le d_{S_c}. \]
Finally, computing a compact representation $\mathbf{t}$ of a solution corresponding to $\mathcal{X}$ 
is done by invoking Algorithm \ref{alg:comprep} on inputs $A= \prod_{i=1}^{\vert S_{c,0}\vert } \mathfrak{p}_i^{(V)_{i}}$ and $V-v_0$, where $\mathfrak{p}_i$ is the prime ideal corresponding to the place $P_i \in S_{c,0}$. We have $\hh(\dvs(A)) \le \hh(\dvs(cO_F))$ and 
$\hh(\text{Norm}(A)) = \deg c$. Thus, computing $\mathbf{t}$ 
takes
\begin{equation*}\label{eq:comp_comprep_minimal}
\bigO\left( gq^{\td} \left( \rr{n\deg c +g }+ \log \left(d_{S_c}+g\right) \rr{n^2+ng}\right)\right)
\end{equation*}
bit operations by Lemma \ref{lem:comp_of_comprep}. 

Putting all these estimates together, we obtain the following asymptotic run time for Algorithm \ref{alg:princwithCR}. 

\begin{thm}\label{lem:comp_alg_princwithCR}
With $d_{S_c}$ defined as in (\ref{eq:define_d_Sc}), Algorithm \ref{alg:princwithCR} solves a norm equation (\ref{eq:ne_defn}) in  
\begin{equation*}\begin{split}
\bigO\biggl(  \left(\prod_{P\in S_{c,0}} (v_{P}(c)+1)\right)\bigl( \vert S_c \vert ^{\omega +\td}d_{s_c}^{\td}+2^{0.3774r}\\ + q^{\td}\left(n^7g^2+n^{2+\td}(\deg c)^{1+\td}+ g \rr{ n\deg c + g} + g \log (d_{S_c}+g) \rr{n^2+ng} \right)\bigr) \biggr) 
\end{split}
\end{equation*}
bit operations, when $n$, $g$, $\deg c$, and $q\rightarrow \infty$. 
\begin{proof}
Each iteration of the loop in steps 6 consists of the five components listed above for an $O_F$-ideal $I$,
and thus takes
\begin{align*}
\bigO(\left(n^7g^2+n^{2+\td}\deg c + (n\deg c)^{1+\td}\right)q^{\td}+ \vert S_c \vert ^{\omega +\td}d_{S_c}^{\td}+2^{0.3774r}\\ + gq^{\td}\left( \rr{n\deg c + g} +  \log (d_{S_c}+g) \rr{n^2+ng}\right))
\end{align*}
bit operations. 
The number of iterations is 
the quantity in \eqref{eq:number_of_I}. Combining all these costs yields the result. 
\end{proof}
\end{thm}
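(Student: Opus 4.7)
The plan is to directly aggregate the per-iteration costs already worked out in the discussion preceding the theorem, multiplying by the number of iterations of the main loop. Since Algorithm \ref{alg:princwithCR} loops over all $O_F$-ideals $I$ dividing $cO_F$, and each such $I$ is uniquely determined by a tuple $(v_{P_1}(I),\dots,v_{P_{|S_{c,0}|}}(I))$ with $0 \le v_{P_i}(I) \le v_{P_i}(c)$, the number of iterations is exactly the quantity in \eqref{eq:number_of_I}, namely $\prod_{P\in S_{c,0}}(v_P(c)+1)$. This accounts for the outer factor in the claimed bound.

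Next, I would bound the cost of one iteration by summing the five contributions identified in the paragraphs just before the theorem. First, forming the HNF representation of $I$ from the prime factorization costs $\bigO(n^{7}g^{2}q^{\td})$ via \cite[Theorem 5.2.2]{ATthesis}. Second, computing $\text{Norm}_{F/k(x)}(I)$ via Lemma~\ref{lem:norm_I_hnf} costs $\bigO((n^{2+\td}\deg c+(n\deg c)^{1+\td})q^{\td})$, using that $I\mid cO_F$ so $\deg(\text{Norm}(d(I)I))=\bigO(\deg c)$ and $\deg d(I)=\bigO(\deg c)$. Third, deciding consistency of the matrix equation \eqref{eq:matrixeq_NE3princ} and producing a short integer solution $\mathcal{X}$ uses the augmented matrix bound $\|[M_{S_c}^{T}\aug V]\|_{\infty}\le d_{S_c}$, so by \cite[Theorem 22]{matrixequation_solv} this costs $\bigO(|S_c|^{\omega+\td}d_{S_c}^{\td})$. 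Fourth, computing a closest vector $v_{0}$ in the unit lattice to $M_{S_c,\infty}^{T}\mathcal{X}$ costs $\bigO(2^{0.3774\,r})$ by \cite{BeckerGamaJoux_SVPCVP}, and the resulting residual satisfies $\|v\|_{\infty}\le d_{S_c}$. Fifth, invoking Algorithm~\ref{alg:comprep} on the input $(I,v)$ costs $\bigO(g\,q^{\td}(\rr{n\deg c+g}+\log(d_{S_c}+g)\rr{n^{2}+ng}))$ by Lemma~\ref{lem:comp_of_comprep}, where the bound on $\|v\|_{\infty}$ is what puts $d_{S_c}$ inside the logarithm rather than an unbounded quantity.

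Adding these five terms and multiplying by the number of iterations $\prod_{P\in S_{c,0}}(v_{P}(c)+1)$ yields exactly the expression stated in the theorem. I would close the proof by remarking that the correctness of each step (in particular, that \eqref{eq:matrixeq_NE3princ} is consistent iff $I$ is principal, and that the resulting compact representation can be checked for the correct norm in negligible additional time compared to the cost already listed) was established in Section~\ref{sec:PIGCRalgdesc}.

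The main obstacle, and the only delicate point, is ensuring that the bound $\|v\|_{\infty}\le d_{S_c}$ is valid so that the heights $\hh(\mu)=\bigO(n\deg c+g+n)$ and $\hh(\beta_i)=\bigO(n^{2}+ng)$ from Lemma~\ref{lem:height_CR} apply cleanly to the compact representation computed in step~12. This requires observing that the residual $M_{S_c,\infty}^{T}\mathcal{X}-v_{0}$ has entries bounded by $\tfrac{1}{2}\sum_{j}|v_{P}(\epsilon_{j})|$ over infinite places $P$, which is in turn dominated by $d_{S_c}$ because each $v_{P}(\epsilon_j)$ is an entry of an LLL-reduced basis of the $S_c$-unit lattice, controlled by Corollary~\ref{cor:bound_LLLbasis_of_Unit_lattice}. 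Once this is in place, the rest of the proof is a routine summation.
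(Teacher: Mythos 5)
Your proposal is correct and follows essentially the same route as the paper's proof: sum the five per-iteration costs (HNF construction, norm of $I$, solving \eqref{eq:matrixeq_NE3princ}, the closest-vector computation, and the compact representation) already established in the preceding discussion, and multiply by the iteration count \eqref{eq:number_of_I}. Your extra remark justifying $\infnorm{M_{S_c,\infty}^{T}\mathcal{X}-v_0}\le d_{S_c}$ simply makes explicit a bound the paper records just before the theorem, so there is no substantive difference.
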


 The compact representations produced by Algorithm \ref{alg:princwithCR} are subject to the same height bounds as those obtained from Algorithm \ref{alg:PGwithCR}. 

We consider the
complexity of Algorithm \ref{alg:princwithCR} when, as before, only one of $n$, $g$, $\deg c$ and $q$ tends to infinity and the others stay fixed. For a bound on $d_{S_c}$ as given in \eqref{eq:define_d_Sc}, we bound $R_F$ by \eqref{ineq:RS'boundq^2g} and use \eqref{eq:boundon_prod_vpc+1}.



\subsubsection{Case $n \rightarrow \infty$}
From (\ref{eq:comp_RRcomputation}), we obtain $\mathsf{RR}(n^2+ng) = \bigO(n^9)$. The upper and lower bounds in \eqref{eq:boundon_prod_vpc+1} 
yield $d_{S_c}= \bigO(2^{n^2})$ and $d_{S_c} = O(1)$, respectively.
The asymptotic complexity of Algorithm \ref{alg:princwithCR} is
    $\bigO \left(  2^{n(0.3774+\deg c)} \right)=2^{n(\deg c+o(1))}$ and $\bigO \left( 2^{0.3774n} \right)$, respectively, i.e.\ exponential. In the first 
case, this is due to the fact that the number of ideals divising $cO_F$ is exponential in $n$, whereas in the second case, the estimate arises from the cost of solving the system \eqref{eq:matrixeq_NE3princ},  

\subsection{Case $g \rightarrow \infty$}
Here, $\mathsf{RR}(n^2+ng) = \bigO(g^{2+\td})$, and $\log (d_{S_c}+g)  = 
\bigO(g^{1+\td})$
giving a polynomial asymptotic complexity of $\bigO\left(  g^{4+\td} \right)$ for Algorithm \ref{alg:princwithCR}
This is asymptotically less than the precomputation of $M_{S_c}$, which is subexponential in $g$ by Lemma \ref{lem:comp_SValMat}. 

\subsection{Case $c \rightarrow \infty$}

From 
(\ref{eq:comp_RRcomputation}), we obtain $\mathsf{RR}(n\deg c +g) = \bigO((\deg c) ^2)$. The bounds on $d_{S_c}$ are the same as in the case $n \rightarrow \infty$. The upper and lower bounds in \eqref{eq:boundon_prod_vpc+1} yields respective asymptotic complexities of $\bigO\left( 2^{n\deg c}(\deg c) ^{\omega+2 +\td}\right)$ and $\bigO((\deg c)^2)$ for Algorithm \ref{alg:princwithCR}; in the latter case, the cost of computing compact representations dominates the overall run time. 

\subsection{Case $q \rightarrow \infty$}

Here, the asymptotic run time of Algorithm \ref{alg:princwithCR}  is $\bigO\left(q^\td\right)$ which is less expensive than the precomputation of $M_{S_c}$, whose cost is given in 

\medskip

Once again, in all cases Algorithm \ref{alg:princwithCR} substantially outperforms Algorithm \ref{alg:PGwithCR}, and Ga\'{a}l-Pohst even more so. This is because the number of ideals tested in Algorithm \ref{alg:princwithCR} is far smaller than the number of elements in the search spaces of the other two algorithms. 

Table \ref{tab:summarycompPICR} summarizes the asymptotic complexities of the Ga\'{a}l-Pohst method, Algorithm \ref{alg:PGwithCR}, and Algorithm \ref{alg:princwithCR}. The complexity estimates in this table are born out by our numerical experiments, described in the next section.

\begin{table}[]
\renewcommand{\arraystretch}{1} 
\begin{center}
\begin{tabular}{|ccc|}
\hline
\multicolumn{1}{|c|}{Ga\'{a}l-Pohst}        & \multicolumn{1}{c|}{Algorithm \ref{alg:PGwithCR}} & Algorithm \ref{alg:princwithCR}  \\ \hline
\multicolumn{3}{|c|}{$n\rightarrow \infty$ (worst)}          \\ \hline

\multicolumn{1}{|c|}{$ 2^{ \bigO\left( n^2 2^{n^2/4}\right)} $}                  & \multicolumn{1}{c|}{$2^{n^3(1+o(1))}$} & {$2^{n(
\deg c + o(1))}$} \\ \hline
\multicolumn{3}{|c|}{$n\rightarrow \infty$, $ cO_F$ prime and inert (best)}  \\ \hline
\multicolumn{1}{|c|}{
$2^{\bigO\left(n^2 2^{n^2/4}\right)}$}        & \multicolumn{1}{c|}{$2^{n^3 (1+o(1))}$} & $\bigO(2^{0.3774 n})$      
\\ \hline
\multicolumn{3}{|c|}{$g\rightarrow \infty$}             \\ \hline
\multicolumn{1}{|c|}{$2^{\bigO(q^g)}$} & \multicolumn{1}{c|}{$2^{\bigO(g)}$} & $\bigO( g^{4+\td})$        \\ \hline
\multicolumn{3}{|c|}{$\deg c \rightarrow \infty$  (worst)}    \\ \hline
\multicolumn{1}{|c|}{$\bigO\left(2^{q^{\td}\deg c} (\deg c)^{\omega}\right)$}      & \multicolumn{1}{c|}{$\bigO(2^{n\deg c}(\deg c)^{\omega})$}                       & $\bigO(2^{n\deg c} (\deg c)^{\omega + 2+\td})$ \\ \hline
\multicolumn{3}{|c|}{$\deg c \rightarrow \infty$, $cO_F$ prime and inert (best)}      \\ \hline
\multicolumn{1}{|c|}{
$\bigO\left(2^{q^\td \deg c}(\deg c)^{\omega}\right)$}  & \multicolumn{1}{c|}{$\bigO((\deg c)^\omega)$}           & $\bigO((\deg c)^2)$ \\ \hline
\multicolumn{3}{|c|}{$q \rightarrow \infty$}            \\ \hline
\multicolumn{1}{|c|}{$q^{\bigO(q^g)}$} & \multicolumn{1}{c|}{$\bigO(q^{gn+\td})$}    & $\bigO(q^{\td})$  
\\ \hline
\end{tabular}\caption{Summary of asymptotic complexity of Ga\'{a}l-Pohst, Algorithm \ref{alg:PGwithCR}, and Algorithm \ref{alg:princwithCR} }\label{tab:summarycompPICR}
\end{center}
\end{table}

\section{Empirical analysis}
The Ga\'{a}l-Pohst method, Algorithm \ref{alg:PGwithCR} and Algorithm \ref{alg:princwithCR}
were all implemented in 
Magma \cite{magma}. 
Our code is available on the first author's GitHub \cite{github_Sumin}. All experiments were performed on an Intel Xeon CPU E7-8891 v4 with 80 64-bit cores at 2.80GHz. 
Selected test results are provided in Figures \ref{fig:n_g=1} and~\ref{fig:g}; an extensive suite of tests and their results can be found in \cite[Chapter 5]{Leem_thesis}. 


All tests and timings were performed on function fields $F/\mathbb{F}_q(x)$, where $q$ is a prime not dividing $n = [F:k(x)]$ and $F$ has at least one infinite place of degree 1. The fields in the selected results presented here all had ideal class number $h_{O_F}=1$. 
Tests were conducted with randomly generated function fields with specified $n$, $g$, $q$, $\deg c$. 
Computations were forcefully terminated when the CPU time required for an algorithm and its precomputation exceeded one day. 
Timings are given in terms of the average number of CPU seconds. 
 

We summarize our observations of our timing tests as follows:
\begin{itemize}
    \item In all test cases, for any norm equation, Algorithm \ref{alg:princwithCR} outperformed Algorithm \ref{alg:PGwithCR} and the Ga\'{a}l-Pohst method.
    \item Our timing results are largely well-aligned with our complexity analysis. Since we only considered worst case complexities, they are not expected to match exactly.
    \item With a set of minimal parameters, $g=1$, $q=3$, $n=1$, $\deg c=1$, the Ga\'{a}l-Pohst method was as fast as Algorithm \ref{alg:PGwithCR} (see Figure \ref{fig:n_g=1}).
    \item As expected from the complexity results, Ga\'{a}l-Pohst slowed down at a much faster rate
    than the other two algorithms as the parameter sizes increased.
    Figure \ref{fig:n_g=1} illustrates this.
    \item The average CPU time taken for precomputating an $S$-unit value matrix was negligible compared to Ga\'{a}l-Pohst and Algorithm \ref{alg:PGwithCR} in all test examples. However, the precomputation took longer than Algorithm \ref{alg:princwithCR} in many cases. This is also not unexpected.
\end{itemize}

Figure \ref{fig:n_g=1} shows the timing results with $n$ varying from $1$ to $10$ and $g=1$, $q=3$, $\deg c=1$ fixed, along with the asymptotic complexities. 
The timing data of Ga\'{a}l-Pohst are only available for $n=2, 4, 5$, because for $n=7$, solving one norm equation already took more than a day due to the huge search space. 
As expected, the shapes of the graphs of the asymptotic complexities and timing results are largely similar, but the actual run times grow slower than the asymptotic complexities.

For all the testing examples, Algorithm \ref{alg:princwithCR} was the fastest. Ga\'{a}l-Pohst was faster than Algorithm \ref{alg:PGwithCR} for some examples with $n=2$, but on average Algorithm \ref{alg:PGwithCR} was faster. From $n=3$ on, Algorithm \ref{alg:PGwithCR} outperformed Ga\'{a}l-Pohst substantially. Ga\'{a}l-Pohst and Algorithm \ref{alg:PGwithCR} are more substantially affected by the growth in $n$, mainly because their search spaces expand doubly exponentially as $n$ grows, while the number of ideals to search in Algorithm \ref{alg:princwithCR} grows exponentially. 

\begin{figure}
\centering
\begin{tikzpicture}
\begin{semilogyaxis}[width=0.9*\textwidth,height=5cm,
axis y line*=left,
xlabel = {$n$},
ylabel = {average CPU seconds},
xmin = 2, xmax = 10,
xtick ={2,3,4,5,6,7,8,9,10},
ymin =  0.007, ymax =1000,
legend entries = {Ga\'{a}l-Pohst, Algorithm \ref{alg:PGwithCR}, Algorithm \ref{alg:princwithCR}},
legend style = {at = {(0.15,-0.25)}, anchor=north}
]
\addplot[color=blue, mark=*] table [y=PG, col sep=comma] {data/n.csv};
\addplot[color=red, mark=square] table [y=PGCR, col sep=comma] {data/n.csv};
\addplot[color=violet, mark=x] table [y=PICR, col sep=comma] {data/n.csv};
\end{semilogyaxis}

\begin{semilogyaxis}[width=0.9*\textwidth,height=5cm,
axis y line*=right,
xlabel = {$n$},
ytick=\empty,
xmin = 2, xmax = 10,
xtick ={2,3,4,5,6,7,8,9,10},
ymin = 4, ymax =10000000000,
legend entries = {Asymptotic complexity of Ga\'{a}l-Pohst, Asymptotic complexity of Algorithm \ref{alg:PGwithCR}, Asymptotic complexity of Algorithm \ref{alg:princwithCR}},
legend style = {at={(0.65,-0.25)}, anchor=north}
]
\addplot[color=blue, domain=2:3, dashed, mark=*]{(1/20000000000)*x^4 *2^(1.58*(x^2 *3^1* 2^(x^2 /4)) + (x^2 - x)/4)};
\addplot[color=red, domain=2:7, dashed, mark=square]{(1/4000)*x^(11)*(x^x)*2^((x^3 - x^2)/4+x + 1.58*x)};
\addplot[color=violet, domain=2:10, dashed, mark=x]{2^(1.3774*x-0.7)};
\end{semilogyaxis}
\end{tikzpicture}  
\captionsetup{justification=centering,margin=1cm}
\caption{Empirical timing results and asymptotic complexities for varying $n$ ($g=1$, $q=3$, $h_{O_F}=1$,  $\deg c=1$, and irreducible $c$)}
 \label{fig:n_g=1}
\end{figure}
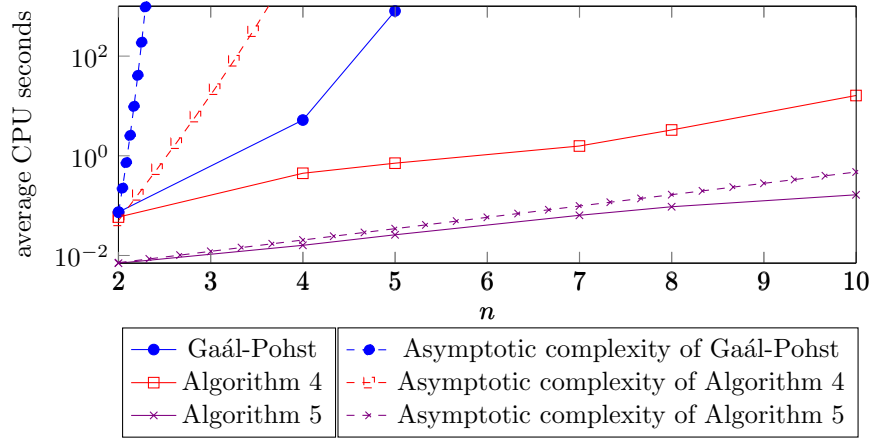

Figure \ref{fig:g} shows the timing results for varying $g$ with $1 \le g \le 18$, with $n=2$, $q=3$, $\deg c=1$. 
Again, there is good agreement between empirical and predicted run times,
No timings for Ga\'{a}l-Pohst resuls are shown 
because the test took too long.
For $g = 1$, Ga\'{a}l-Pohst took 0.059 CPU seconds on average. Already for $g=2$, Ga\'{a}l-Pohst ran more than 1 day.
For $g\ge9$, Algorithm \ref{alg:PGwithCR} ran over a day.
In all test examples for varying $g$, as expected, Algorithm \ref{alg:princwithCR} was the fastest, and the precomputation took longer than the time required by Algorithm \ref{alg:princwithCR} to solve a norm equation.

\begin{figure}
\centering
\begin{tikzpicture}
\begin{semilogyaxis}[width=0.9*\textwidth,height=5cm,
xlabel = {$g$},
ylabel = {average CPU seconds},axis y line*=left,
xmin = 1, xmax = 19, xtick={1,2,4,6,8,10,12,14,16,18},
ymin =  0, ymax =6050,
legend entries = {Algorithm \ref{alg:PGwithCR}, Algorithm \ref{alg:princwithCR}}, legend style = {at={(0.15, -0.25)}, anchor=north}
]
\addplot[color=red, mark=square] table [y=PGCR, col sep=comma] {data/g.csv};
\addplot[color=violet, mark=x] table [y=PICR, col sep=comma] {data/g.csv};
\end{semilogyaxis}
\begin{semilogyaxis}[width=0.9*\textwidth,height=5cm,
xlabel = {$g$},
xmin = 1, xmax = 19, xtick={1,2,4,6,8,10,12,14,16,18},
axis y line*=right,
ytick=\empty,
ymin =  0, ymax =50000000000,
legend entries = { Asymptotic complexity of Algorithm \ref{alg:PGwithCR}, Asymptotic complexity of Algorithm \ref{alg:princwithCR}}, legend style = {at={(0.65, -0.25)}, anchor=north}
]
\addplot[color=red, domain=1:7, dashed, mark=square]
{(2)*x^4 *2^(3.6*x)};
\addplot[color=violet, domain=1:19, dashed, mark=x]
{exp(((x*log2(x))^(1/2))*((2*log2(x))^(-1/2)))+x^4};
\end{semilogyaxis}
\end{tikzpicture}\captionsetup{justification=centering,margin=1cm}
    \caption{Empirical timing results and asymptotic complexities for varying $g$ ($n=2$, $q=3$, $h_{O_F}=1$, $\deg c=1$, irreducible $c$)}
    \label{fig:g}
\end{figure}
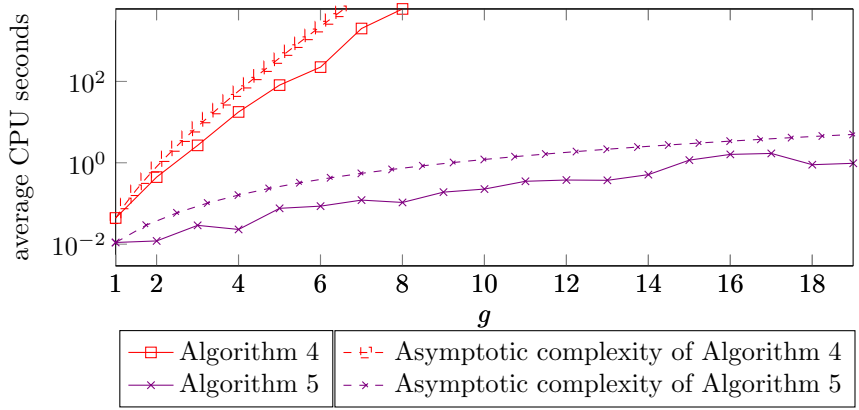

\section{Conclusion}

There are several interesting opportunities for future work to extend the results of our paper.  One open problem is to investigate the performance of our algorithms on norm equations over a global function field $F/\mathbb{F}_q$ with $\gcd (q,n) >1$. Such $F$ were excluded during the analyses in this paper to avoid wild ramification. After running some examples for $F/\mathbb{F}_q$ with $\gcd (q,n) >1$, we noted that the Ga\'{a}l-Pohst method tends to have a larger search space for norm equations over such $F$ compared to those with $\gcd(q,n)=1$. It is unclear whether the larger search spaces are due to wild ramifications. Investigating the reasons may lead to the development of efficient algorithms optimized for norm equations over those $F$. 

A related problem of interest is to find solutions of norm equations in submodules~$M$ of $O_F$, instead of $O_F$. One challenge is that if $M$ has rank less than $n$, we cannot consider solutions in $M$ up to associates. This is because for an element $\alpha\in M$, an element that is associate to $\alpha$ is not guaranteed to belong to $M$.

Another open problem is to develop a version of the algorithm for computing compact representations in \cite{EisentragerHallgren} that does not require $F$ to have an infinite place of degree 1.  An infinite place of degree 1 is required in Algorithm \ref{alg:comprep} to compute a correct compact representation equal to an element $\alpha$ and not just close to $\alpha$. It is unclear how to ensure thus accuracy when there is no infinite place of degree 1.

Finally, given the successful results of this paper, it seems promising to investigate solving different Diophantine equations over global function fields using compact representations.

\bibliographystyle{abbrv}
\bibliography{ref.bib} 
\end{document}